\RequirePackage{tikz}
\documentclass[12pt]{amsart}
\usepackage{amssymb,amsmath,mathrsfs, ,setspace,enumitem, xcolor} 
\usepackage{setspace}
\usepackage{bold-extra}
\usepackage{gensymb}
\usepackage{tabularx}
\usepackage{extarrows}
\usepackage{booktabs}
\usepackage{hyperref}
\usepackage{bm}
\usetikzlibrary{fadings}
\usetikzlibrary{patterns}
\usetikzlibrary{shadows.blur}
\usetikzlibrary{shapes}

\makeatletter \oddsidemargin.9375in \evensidemargin \oddsidemargin
\marginparwidth1.9375in \makeatother

\def\authorsaddresses#1{\dedicatory{#1}}

\newcommand{\dis}{\displaystyle}
\newcommand{\nn}{\neg\neg}

\newtheorem{proposition}{\textbf{Proposition}}[section]
\newtheorem{definition}{\textbf{Definition.}}[section]
\newtheorem{example}{\textbf{Example.}}[section]
\newtheorem{corollary}{\textbf{Corollary.}}[section]
\newtheorem{lemma}{\textbf{Lemma.}}[section]
\newtheorem{theorem}{\textbf{Theorem.}}[section]
\newtheorem{remark}{\textbf{Remark.}}[section]

\DeclareOldFontCommand{\rm}{\normalfont\rmfamily}{\mathrm}%
\DeclareOldFontCommand{\sf}{\normalfont\sffamily}{\mathsf}%
\DeclareOldFontCommand{\tt}{\normalfont\ttfamily}{\mathtt}%
\DeclareOldFontCommand{\bf}{\normalfont\bfseries}{\mathbf}%
\DeclareOldFontCommand{\it}{\normalfont\itshape}{\mathit}%
\DeclareOldFontCommand{\sl}{\normalfont\slshape}{\@nomath\sl}%
\DeclareOldFontCommand{\sc}{\normalfont\scshape}{\@nomath\sc}%
\DeclareRobustCommand*\cal{\@fontswitch{\relax}{\mathcal}}%
\DeclareRobustCommand*\mit{\@fontswitch\relax\mathnormal}%

\raggedbottom

\usepackage[top=3cm, left=2cm, right=2cm, bottom=2.5cm]{geometry}

\begin{document}
\setcounter{page}{1}


\title[Some Epistemic Extensions of G\"odel Fuzzy Logic]{Some Epistemic Extensions of G\"odel Fuzzy Logic}

\author[D. Dastgheib, H. Farahani, A.H. Sharafi]{D. Dastgheib$^1$, H. Farahani$^1$, A.H. Sharafi$^2$}


\authorsaddresses{$^1$Department of Computer Science, Shahid Beheshti University, G.C, Tehran, Iran\\
	h$_-$farahani@sbu.ac.ir, D\_Dastgheib@sbu.ac.ir \\
\vspace{0.5cm}
$^2$Department of Mathematics, Tafresh University, Tafresh, Iran\\
sharafi@tafreshu.ac.ir
}

\keywords{Epistemic Logic, G\"odel Logic, Fuzzy Muddy Children, Fuzzy Epistemic Logic, Soundness, Completeness, Finite Model Property}

\begin{abstract}
	In this paper we prove soundness and completeness of some epistemic extensions of G\"odel fuzzy logic, based on Kripke models in which both propositions at each state and accessibility relations take values in [0,1]. We adopt belief as our epistemic operator, acknowledging that the axiom of Truth may not always hold. 
		We propose the axiomatic system $\textbf{K}_\textbf{F}$ serves as a fuzzy variant of  classical epistemic logic $\textbf{K}$, then by considering consistent belief and adding positive introspection and Truth axioms to the axioms of $\textbf{K}_\textbf{F}$, the  axiomatic extensions $\textbf{B}_\textbf{F}$ and $\textbf{T}_\textbf{F}$ are established. 
		To demonstrate the completeness of $\textbf{K}_\textbf{F}$, we present a novel approach that characterizes formulas semantically equivalent to $\perp$ and we introduce a grammar describing formulas with this property. Furthermore, it is revealed that validity in $\textbf{K}_\textbf{F}$ cannot be reduced to the class of all models having crisp accessibility relations, and also  $\textbf{K}_\textbf{F}$ does not enjoy the finite model property.
		These properties distinguish $\textbf{K}_\textbf{F}$ as a new modal extension of  G\"odel fuzzy logic which differs from  the standard G\"odel Modal Logics $\bm{\mathcal{G}}_\Box$ and $\bm{\mathcal{G}}_\Diamond$ proposed by Caicedo and O. Rodriguez.
\end{abstract}
\maketitle



	\section{Introduction}

			The study of \textit{fuzzy} epistemic logic is indispensable due to the inherent fuzziness of belief and knowledge. Each person's understanding and conviction about something differ, and utilizing real numbers within the range of $[0,1]$ as degrees of belief or knowledge proves to be advantageous. This approach offers fresh insights into the concepts of belief and knowledge. For instance, Gettier's counterexamples to Plato's definition of knowledge, known as ``justified true belief," \cite{Gettier1963} may not pose a significant problem. In other words, to address Gettier's challenges, it appears appropriate to adopt a fuzzy interpretation of Plato's definition.
			
		The literature contains several modal extensions of fuzzy logics. For example, \cite{CMRR13, Caicedo2004, CR10, CR15, Diequez2023} present modal extensions of G\"odel fuzzy logic. Additionally, \cite{Corsi2023, Hansoul2006, Hansoul2013} investigate modal extensions of {\L}ukasiewicz logic, employing Kripke-based semantics with classical accessibility relations. In the study conducted in \cite{product2015}, modal extensions of product fuzzy logic are examined using both relational and algebraic semantics. The relational semantics in this case are based on Kripke structures with classical accessibility relations. Moreover, \cite{Hajek2010} introduces a fuzzy variant of recursively axiomatized logics that extend $S_5$, proposed by
		H\'{a}jek.
		Furthermore, \cite{Thesis2015} studies modal logics over MTL, where their semantics are based on Kripke structures with truth values ranging over [0,1] and classical accessibility relations.

	In this paper, our basic epistemic extension of G\"odel fuzzy logic is $\textbf{K}_\textbf{F}$ in which the corresponding semantics is based on Kripke structures  with both fuzzy truth values and fuzzy accessibility relations. $\textbf{K}_\textbf{F}$ serves as a fuzzy variant of classical epistemic logic $\textbf{K}$. Similarly, in \cite{CR10}, the standard G\"odel modal logics $\bm{\mathcal{G}}_\Box$ and $\bm{\mathcal{G}}_\Diamond$ over the standard G\"odel algebra are introduced. In our study, we approach it differently by considering belief $B$ as the modal operator and by introducing a semantics based on Kleene-Dienes implication. This approach seems to enhance an agent's belief representation in real-world scenarios.
	We can see, in contrast to  $\bm{\mathcal{G}}_\Diamond$, the logics $\textbf{K}_\textbf{F}$ and $\bm{\mathcal{G}}_\Box$ both enjoy the finite model property. Additionally unlike  $\bm{\mathcal{G}}_\Box$, the validity in $\textbf{K}_\textbf{F}$ and $\bm{\mathcal{G}}_\Diamond$ requires truly fuzzy accessibility relations. 
	Relying on \cite{CR10}, G\"odel justification logic was proposed in \cite{Pischke2020}. Recently, completeness of many-valued G\"odel counterpart of the classical modal logic KD45 and K45 extensions of G\" odel fuzzy logic was provided in \cite{Rodriguez2022}.
		
	In the following, we provide a collection of axioms and rules from traditional modal logics (refer to \cite{ditmarsch2015}). These axioms and rules serve as the foundation for proposing our desired logics. \\
		1.	$B_a(\varphi\rightarrow\psi)\rightarrow(B_a\varphi\rightarrow B_a\psi)$ (\textbf{distribution of $B_a $ over implication}),\\
		2.	from $\varphi$ and $\varphi\rightarrow\psi$ infer $\psi$ (\textbf{modus ponens}),\\
		3.	from $\varphi$ infer $B_a\varphi$ (\textbf{necessitation of belief}),\\
		4.	$\neg B_a\bot$ (\textbf{consistent belief}),\\
		5.	$B_a\varphi\rightarrow B_a B_a\varphi$ (\textbf{positive introspection}),\\
		6.	$\neg B_a\varphi\rightarrow B_a\neg B_a\varphi$ (\textbf{negative introspection}),\\
		7.	$B_a\varphi\rightarrow\varphi$ (\textbf{Truth}),\\
		8. $\varphi\rightarrow B_a\neg B_a\neg\varphi$ \textbf{(Brouwer's axiom)},\\		
		where, for every agent $a$, the formula $B_a \varphi$ is interpreted as “agent $a$ believes in formula $\varphi$".
		

	In Section 2, we present some epistemic extensions of G\"odel logic. Firstly, we introduce a unified language for these epistemic G\"odel logics. Next, we provide a Kripke-based semantics, where propositions at possible states and accessibility relations take fuzzy values within the range of [0,1]. 		
				We demonstrate the validity of consistent belief, positive introspection, and Truth in the proposed fuzzy Kripke models, where the relations are serial, transitive, and reflexive, respectively. Additionally, we introduce a fuzzy adaptation of the well-known muddy children puzzle and utilize it to illustrate the invalidity of certain schemes. 
Moreover, this fuzzy variant of the Muddy children problem sheds light on considering Belief as our modal operator.
				
	However, finding a correspondence between negative introspection and Brouwer's axiom with some properties of fuzzy accessiblity relations is still an open problem.

%
%
%
%

		In comparison to the logic $\bm{\mathcal{G}}_\Box$ proposed in \cite{CR10}, where both $K_\Box$: ($\Box(\varphi\rightarrow \psi) \rightarrow (\Box \varphi \rightarrow \Box \psi)$) and $Z_\Box$: ($\neg \neg \Box \varphi \rightarrow \Box \neg \neg \varphi$) are added to G\"odel logic, in $\textbf{K}_\textbf{F}$ we only include the axiom K: $B(\varphi\rightarrow \psi) \rightarrow (B \varphi \rightarrow B \psi)$. 
		 As a result, $\textbf{K}_\textbf{F}$ is a weaker logic than $\bm{\mathcal{G}}_\Box$. 	
		Next, we extend the axiomatic system $\textbf{K}_\textbf{F}$ by incorporating the axioms of \textit{consistent belief} and \textit{positive introspection}. This new axiomatic system is referred to as the \textit{logic of fuzzy belief} and is denoted as $\textbf{B}_\textbf{F}$.
		 Furthermore, we propose the system $\textbf{T}_\textbf{F}$ by adding the \textit{Truth} axiom to $\textbf{B}_\textbf{F}$, allowing the operator $B$ to be interpreted as knowledge in $\textbf{T}_\textbf{F}$.

	In Section 3, we establish the soundness and semi-completeness of these axiomatic systems with respect to the corresponding Kripke models. By semi-completeness, we mean that if a formula $\varphi$ is valid, then $\neg\neg\varphi$ is provable. Furthermore, we demonstrate the property of strong semi-completeness.
	
	In Section 4, we see that traditional Lindenbaum's method cannot be employed to prove the completeness of the desired logics. This is because there are certain non-valid formulas $\varphi$ that hinder the construction of any maximal and consistent set containing $\neg \varphi$. These formulas $\varphi$ possess the following properties:
	
	$\bullet$ For all models $M$ and all states $s$, the value of $\varphi$ in state $s$ of model $M$ is \textit{strictly} greater than 0.
	
	$\bullet$ There exists a model $M'$ and a state $s'$ in that model where the value of $\varphi$ in state $s'$ is \textit{strictly} less than 1.

			 It is important to note that for such a formula $\varphi$, we have $\neg \varphi \equiv \bot$.
			 We propose a new approach by introducing three classes: $\mathcal{T}$, $\mathcal{E}$, and $\mathcal{O}$, which categorize formulas whose negations are equivalent to $\perp$. We prove that if a formula $\varphi$ is valid, then $\neg \neg \varphi \rightarrow \varphi$ is provable.			 
			 Then, by utilizing the semi-completeness theorem, we establish the completeness theorem for the axiomatic systems $\textbf{K}_\textbf{F}$, $\textbf{B}_\textbf{F}$, and $\textbf{T}_\textbf{F}$ in relation to appropriate classes of Kripke models. 
			 
			 In Section 5, we give an example of a formula which is valid in all crisp models but is not valid in the class of all Epistemic G\"odel logic models. Furthermore, we demonstrate that the schema $\neg \neg B\neg \neg \varphi\rightarrow \neg \neg B \varphi$ is not valid in a given infinite model; however, we establish its validity within the class of all finite models. Therefore, we illustrate that EGL does not possess the finite model property.


		In this paper, we assume that the reader is already acquainted with the fundamental principles of classical modal logic (see, for instance, \cite{handbook}).
		Furthermore, our primary references for G\"odel logic are \cite{Hajek1998} and \cite{BaazPreining2011}, where we rely on the following theorems of G\"odel logic to support our proofs:
		$$
		\begin{array}{lll}
			&[(G1)] &(\varphi\wedge\psi)\rightarrow\varphi ,\\
			&[(G2)]& (\varphi\rightarrow(\psi\rightarrow\chi))\leftrightarrow((\varphi\wedge\psi)\rightarrow\chi),\\
			&[(GT1)] & \psi\rightarrow(\varphi\rightarrow\psi),\\
			&[(GT2)]& (\varphi\rightarrow\psi)\rightarrow(\neg\psi\rightarrow\neg\varphi),\\
			&[(GT3)]& \neg(\varphi\wedge\psi)\rightarrow(\varphi\rightarrow\neg\psi),\\
			&[(GT4)] &(\neg\neg\varphi\wedge\neg\neg\psi)\leftrightarrow\neg\neg(\varphi\wedge\psi),\\
			&[(GT5)] &\neg\neg(\varphi\rightarrow\psi)\leftrightarrow(\neg\neg\varphi\rightarrow\neg\neg\psi), \\
			&[(GT6)]&\varphi\rightarrow\neg\neg\varphi,\\
			&[(GT7)]&\neg\varphi\leftrightarrow\neg\neg\neg\varphi,\\
			&[(GT8)]&\varphi\wedge\neg\varphi\leftrightarrow\bot,\\
			&[(GT9)]&\varphi \rightarrow (\psi \rightarrow(\varphi \wedge \psi)).
	\end{array}
$$

	Throughout this paper, we will make the assumption that $\mathcal{P}$ represents a set of atomic propositions, and $\mathcal{A}$ represents a set of agents, unless otherwise specified.
		
	\section{Epistemic G\"odel logic}
	
	In this section, we present some epistemic extensions of G\"odel fuzzy logic. Initially, we introduce a language specifically designed for these logics, which we refer to as the Epistemic G\"odel Logics language, or simply EGL-language. Subsequently, we establish Kripke-based semantics for this language, wherein the values of propositions and accessibility relations are fuzzy. Finally, we propose a set of axiomatic systems over this language.

	\subsection{Semantics of Epistemic G\"odel Logic}
		
	\begin{definition}
		The language of \textit{Epistemic G\"odel Logic} (EGL-language) is defined using the following Backus-Naur Form (BNF):
		$$\varphi\ ::=\ p\mid\bot\mid\varphi\wedge\varphi\mid\varphi\rightarrow\varphi\mid B_a{\varphi}$$
		where, $p\in\mathcal{P}$ and $a\in\mathcal{A}$. 
		Note that the language of EGL is an expansion of the language of G\"odel logic (GL). For each $a\in \mathcal{A}$, we add an epistemic operator $B_a$ to the language of GL. 
		Further connectives $\neg$, $\vee$ and $\leftrightarrow$ are defined similar to G\"odel logic (See \cite{Hajek1998}) as follows:
		\begin{align*}
			\neg\varphi=&\varphi\rightarrow\bot,\\
			\varphi\vee\psi=&((\varphi\to\psi)\to\psi)\wedge((\psi\to\varphi)\to\varphi),\\
			\varphi\leftrightarrow\psi=&(\varphi\to\psi)\wedge(\psi\to\varphi).
		\end{align*}
	\end{definition}
 
	\begin{definition}(\textbf{EGL-Model})\label{label1}		 
	An EGL-model is a structure $M=(S,r_{a{\mid_{a\in\mathcal{A}}}},\pi)$, where
	\vspace{-0.2cm}

	\begin{itemize}
		\item[$\bullet$]$S$ is a non-empty  countable set of states, 
		\item[$\bullet$]$r_{a{\mid_{a\in\mathcal{A}}}}:\ S \times S\rightarrow [0,1]$ is a function which assigns a value in the range of $[0,1]$ to each pair $(s,s')\in S \times S$. We call it the \textit{accessibility relation}. Specifically, if $r_{a{\mid_{a\in\mathcal{A}}}}(s, s')=0$, it indicates that $s'$ is not accessible from $s$,

		\item[$\bullet$]$\pi:\ S\times\mathcal{P}\rightarrow [0,1]$ is a \textit{valuation} function, where in each state $s \in S$ assigns a truth value to every atomic proposition $p\in\mathcal{P}$.
	\end{itemize}\vspace{-0.2cm}

	The valuation function $\pi$ can be naturally extended to cover all formulas and we denote the extended function by $V$. 
	\\\\
	Following the convention in \cite{CR10}, we refer to the structure $M=(S, r_{a{\mid_{a\in\mathcal{A}}}} ,\pi)$:\\
	{\bf serial}, if for all $a\in\mathcal{A}$ and all $s\in S$, there is a $s'\in S$ such that $r_a(s,s')= 1$,\\
	{\bf reflexive}, if for all $a\in\mathcal{A}$ and all $s\in S$, $r_a(s,s)=1$,\\
	{\bf symmetric}, if for all $a\in\mathcal{A}$ and all $s,s'\in S$, $r_a(s,s')=r_a(s',s)$,\\
	{\bf transitive}, if for all $a\in\mathcal{A}$ and all $s,s',s''\in S$, $$r_a(s,s'')\geq\min\{r_a(s,s'),r_a(s',s'')\}.$$

	\end{definition}
From this point forward, in cases where there is no ambiguity, we will use $B \varphi$ instead of $B_a \varphi$ and $r$ instead of $r_a$. Furthermore, for the sake of simplicity throughout this paper, we will use ``model" and ``formula" instead of ``EGL-model" and ``EGL-formula," respectively.

Let $M = (S, r, \pi)$ be a model. For each state $s \in S$ and each formula $\varphi$, we will use the notation $V_s(\varphi)$ instead of $V(s,\varphi)$. This notation is defined as follows:
	\begin{itemize}
		\item[$\bullet$]$V_s(p)=\pi(s,p)~ for ~each~ p\in\mathcal{P}$;
		\item[$\bullet$]$V_s(\bot)=0$;

		\item[$\bullet$]$V_s(\varphi\wedge\psi)=\min\{V_s(\varphi),V_s(\psi)\}$;

		\item[$\bullet$]$V_s(\varphi\rightarrow\psi)=\left\{
		\begin{array}{lr}
			1 & V_s(\varphi)\leq V_s(\psi)\\
			V_s(\psi) & V_s(\varphi)> V_s(\psi)
		\end{array}\right.
		$;
		\item[$\bullet$]$\dis V_s(B_a\varphi)=\inf_{s'\in S}\max\{1-r_a(s,s'),V_{s'}(\varphi)\}
		$,
	\end{itemize}
	\vspace{-0.2cm}
	and the semantics of $\neg$ and $\vee$ are defined as in G\"odel logic:
	\begin{itemize}
		\item[$\bullet$]$V_s(\neg\varphi)=\left\{
		\begin{array}{lr}
			0& V_s(\varphi)>0\\
			1& V_s(\varphi)=0
		\end{array}\right.
		$;
		\item[$\bullet$]$V_s(\varphi\vee\psi)=\max\{V_s(\varphi),V_s(\psi)\}$.
	\end{itemize}
	Note that $\neg\varphi$ and so $\neg\neg\varphi$ take crisp values.

	The idea behind the aforementioned definition of belief is derived from the semantics definition of belief in classical epistemic logic, which can be expressed as follows:	
	$$V_{s}(B\varphi)=1~~\text{if and only if}~~(\forall s')(r(s,s')=1 \Longrightarrow V_{s'}(\varphi)=1).$$ 
When we want to generalize implication in the above definition which takes values in $\{0,1\}$, to a fuzzy environment where the truth values belongs to the unit interval $[0,1]$, fuzzy implications are useful . These implications are essential for fuzzy logics and play a significant role in solving fuzzy relational equations, fuzzy mathematical morphology,  image processing, in defining fuzzy subsethood, etc. \cite{Helbin2019}. Moreover, the following properties seems reasonable for a fuzzy belief:
%
	 \begin{itemize}
		\item[1)] for each state and every agent, the less the access to possible states, the stronger the beliefs.
		\item[2)] for each state and every agent, the greater the truth values of a formula $\varphi$ in accessible possible states, the stronger the belief in  $\varphi$.
	\end{itemize}
Hence an appropriate implication in a fuzzy perspective can be a fuzzy implication, since for which when the value of $V_{s}(B\varphi)$ increases the value of $r(s,s')$ decreases or when the value of $V_{s'}(\varphi)$ increases.
%
%
%

 In \cite{CR10}, the authors utilized G\"odel implication 
  in the semantics definition of their modal operator,
that is $V_s^G(B\varphi) = \inf_{s'\in S}\{r(s,s')\Rightarrow_{G} V_{s'(\varphi)}\}$, where 
	\begin{align}\nonumber 
	&\Rightarrow_{G}(x,y)=\left\{
	\begin{array}{lr}
		1& x\leq y\\
		y& x>y
	\end{array}\right.
\end{align}
is the G\"odel implication.
 Instead of the  G\"odel implication, we employ \textit{Kleene-Dienes implication}:  $\Rightarrow_{KD}(x,y)=\max\{1-x,y\}$. Using the Kleene-Dienes fuzzy implication in the definition of the semantics for the belief operator $B$ provides some advantages and is considered more suitable for our purposes. For instance, when using the Kleene-Dienes implication operator, changes in the values of the accessibility relation often lead to changes in the truth value of $B (\varphi)$. In contrast, when the G\"odel implication is utilized, the values of the accessibility relation may not necessarily change, and as a consequence, the truth value of $B (\varphi)$ does not necessarily change.
   Furthermore, the subsequent example highlights the advantage of incorporating $\Rightarrow_{KD}$ into the semantics of $B$.
\begin{example}\label{colorblind}\textbf{(colorblindness)}
	Let's consider a scenario involving Alice, who is aware of her colorblindness. She is wearing a dark green left sock and is attempting to find a matching right socks from her wardrobe.
	We define the set of atomic propositions as $\mathcal{P} = \{p_i \,\mid\, i\in \{1,\cdots, 5\}\}$, where $p_i$ represents the proposition that the $i$-th right sock is dark green. For this example, let's assume that the 2nd right sock is actually light green. 
For simplicity, we use a two-state model $M$, where $\mathcal{A}=\{a\}$ represents Alice. The set of states, $S$, consists of $s_1$ and $s_2$. The accessibility relation $r_{Alice}$ and valuation function $\pi$ are depicted in Figure \ref{fig_example-colorblindness}.
In this figure, $\pi(s_2,p_i)=1$ indicates that the $i$-th right sock is completely dark green.
To compare the belief operators $I_{KD}$ and $I_G$, we find that $V_{s_2}(B_a p_2)=0.7$, while $V_{s_2}^G(B_a p_2) = 1$. The former value seems more appropriate since it suggests that Alice does not fully believe that her sock is dark green. 
	\end{example}

\begin{figure}
	\begin{center}
		\resizebox*{4cm}{!}{
			
			\begin{tikzpicture}[->,shorten >=1pt,auto,
				thick, main node/.style={circle,fill=yellow!10,draw, font=\sffamily\Large\bfseries,minimum size=10mm}]
				\node [main node] (v1) at (-3,0) {$s_1$};
				\node [main node] (v2) at (0,0) {$s_2$};
				\draw  (v1) edge[loop above, looseness=10] (v1);
				\draw  (v2) edge[loop above, looseness=10] (v2);
				\draw  (v1) edge [bend right] (v2);
				\draw  (v2) edge [bend right] (v1);
				\node at (-1.5,1) {0.5};
				\node at (-1.5,-1) {0.5};
				\node at (0.5,1) {0.5};
				\node at (-3.5,1) {0.5};
				\node at (0,-1) {$\pi(s_2,p_2)=1$};
				\node at (-3.5,-1) {$\pi_(s_1, p_2)=0.7$};
			\end{tikzpicture}
			
		}
	\end{center}
	
	\caption{Colorblindness model}\label{fig_example-colorblindness}
\end{figure}

%

As in \cite{ditmarsch2015} we have the following definition.

	\begin{definition}
		Let  $\varphi$ be a formula and  $M=(S, r_{} ,\pi)$ be a model.
    	\vspace{-0.2cm}
		\begin{itemize}
			\item[(1)] 	For a given state $s \in S$, we say that a formula $\varphi$ is true in the pointed model $(M, s)$, denoted as $(M, s) \vDash \varphi$, if $V_s(\varphi) = 1$. Here, the pair $(M, s)$ is referred to as a pointed model, representing a specific state $s$ within the model $M$.

			\item[(2)] We say that a formula $\varphi$ is true in model $M$, denoted as $M \vDash \varphi$, if for every state $s' \in S$, $(M, s') \vDash \varphi$. This notation signifies that $\varphi$ holds true in $M$ when it is true in each state $s'$ of the model $(M, s')$.

			\item[(3)] If $\mathcal{M}$ is a class of models, we say that a formula $\varphi$ is $\mathcal{M}$-valid, denoted as $\mathcal{M}\vDash\varphi$, if for every model $M' \in \mathcal{M}$, we have $M'\vDash\varphi$. In other words, $\varphi$ is $\mathcal{M}$-valid if it holds true in every model $M'$ belonging to the class $\mathcal{M}$.

			\item[(4)] A formula $\varphi$ is considered EGL-valid or simply valid, denoted as $\vDash\varphi$, if it holds true in every model $M$. In other words, for any model $M$, we have $M\vDash\varphi$. This notation signifies that $\varphi$ is universally true across all possible models.
		\end{itemize}

	\end{definition}

	In the following, we give a fuzzy version of traditional muddy children problem and model it using EGL.

	\begin{example} \textbf{(A fuzzy muddy children)}\label{MuddyChildren} 

	In the classic form of the muddy children puzzle, it is assumed that a group of children are playing outside and some of them have muddy foreheads. Each child can observe whether the foreheads of the other children are muddy or not, but they do not have knowledge about their own forehead.
		
	To model this puzzle in a fuzzy format, we suggest introducing a ``degree of muddiness" that represents the extent of muddiness on a scale of real numbers over $[0,1]$. By utilizing Definition \ref{label1} and leveraging the principles of fuzzy logic, we can create a fuzzy version of the muddy children problem.

		Let's consider a scenario with $k$ children, represented by the set of agents $\mathcal{A} = \{a_1,\cdots,a_k\}$. We define the set of atomic propositions as $\mathcal{P}= \{m_i \mid 1\leq i \leq k\}$, where each atomic proposition $m_i$ corresponds to the agent $a_i$ and represents the statement ``The forehead of agent $a_i$ is muddy."
		In this model, each state is defined as a $k$-tuple $(t_1,\cdots,t_k)$, where $t_i \in [0, 1]$ represents the degree of muddiness of the $i$-th child's forehead. More formally, we can denote the set of all possible states as $S=\{(t_j)_{1\leq j\leq k}\mid t_j \in [0,1]\}$.

		In this model, we define the valuation function $\pi:S\times \mathcal{P}\rightarrow [0,1]$ as follows:
		$$\pi (\,(t_j)_{1\leq j \leq k}\, , m_i ) = t_i \qquad(1\leq i\leq k),$$
	this means that the truth value of the proposition $m_i$ in the state $(t_j)_{1\leq j \leq k}$ is given by the corresponding degree of muddiness  $t_i$. For example, if we have three agents and consider the state  $(0, 0.5, 0.75)$, we can evaluate the truth value of the proposition $m_3$ as $\pi((0, 0.5, 0.75), m_3)=0.75$.
		
		Let $s_1 = (t_j^1)_{1\leq j\leq k}$ and $s_2 = (t_j^2)_{1\leq j\leq k}$ be two states in $S$. In addition, suppose that for each agent $a\in \mathcal{A}$, $\mathcal{B}_a\in [0,1]$ denotes the degree of visual impairment of an agent $a$. Now for each agent $a_i$, $(1\leq i\leq k)$, we define the accessibility relation $r_{a_i}$ as follows:
		$$r_{a_i}(s_1,s_2)=\min\Big\{r_{a_i a_j}^{s_1 s_2}\mid~ 1\leq j\leq k, j\neq i\Big\}\quad$$
		in which, for each $~1\leq j\leq k,~j\neq i$ we have
		$$r_{a_i a_j}^{s_1 s_2}= \left\{
		\begin{array}{lr}
			{\mathcal{B}}_{a_i}(1-\alpha \vert t_j^1-t_j^2 \vert )&  t_j^1\neq t_j^2\\
			{\mathcal{B}}_{a_i}& t_j^1= t_j^2
		\end{array}\right.$$
		where  $\alpha\in(0,1)$ is a tuning parameter.
		 It is important to note that for an agent $a_i$,  we can see 
		from the definition of $r_{a_i a_j}^{s_1 s_2}$ that the greater the difference between the amount of mud on the forehead of agent $a_j$ in the states $s_1$ and $s_2$ (i.e. greater $\vert t_j^1-t_j^2\vert$), the smaller the value of $r_{a_i a_j}^{s_1 s_2}$. This implies that agent $a_i$ can  better distinguish the $j^{th}$ element of states $s_1$ and $s_2$. 		
		Additionally, it's worth mentioning that by  assigning different values to the tuning (rational) parameter $\alpha$ we can control the impact of $\vert t_j^1-t_j^2\vert$. For example  decreasing the value of  $\alpha$,  reduces the influence of muddiness and increases the effect of blindness

\end{example}


In the following proposition, we use the aforementioned example to illustrate that consistent belief, \textit{positive introspection}, and \textit{Truth} lack validity.

\begin{proposition} \label{invalid}The following schemas are not valid. 
\begin{itemize}
    \item[(1)]$\neg B\bot$ (\textit{Consistent Belief})
	\item[(2)]$B\varphi\rightarrow B B\varphi$ (\textit{Positive Introspection})
	\item[(3)]$B \varphi \rightarrow \varphi$ (Truth)
\end{itemize}

\end{proposition}

\begin{proof} We construct a model $M'$ based on Example \ref{MuddyChildren} to demonstrate that the desired schemas are not valid. Let 
$\mathcal{A}=\{a,b\}$, ${\mathcal{B}}_a=1$, ${\mathcal{B}}_b=0.9$ and $\alpha=0.2$. In this model, the set of states is given by $S=\Big\{(i,j)\mid i,j\in\{0,0.5,1\}\Big\}$. For arbitrary states $s_1=(x,y)$ and $s_2=(x',y')$ in $S$, the accessibility relations $r_a$ and $r_b$ are computed as follows:

$$	r_{a}((x,y),(x',y'))=\left\{
\begin{array}{ll}
	1&\text{ if } \vert y-y' \vert=0\\
	0.9&\text{ if } \vert y-y'\vert =0.5\\
	0.8&\text{ if } \vert y-y'\vert =1
\end{array}\right.
\;
r_b((x,y),(x',y'))=\left\{
\begin{array}{ll}
	0.9& \text{ if }\vert x-x'\vert =0\\
	0.81&\text{ if } \vert x-x'\vert =0.5\\
	0.72&\text{ if } \vert x-x'\vert =1
\end{array}\right.\vspace{-0.2cm}$$
\vskip 0.2cm
Now we provide instances of schemas  (1) to (3) that are refuted to be valid in $M'$. For each schema we find a state $s\in S$ and a formula $\varphi$ such that $(M',s)\nvDash\varphi$.

	\begin{itemize}
\item[(1)]
	We show that $(M', (0,0))\nvDash \neg B_b \perp$. It is easy to check that

			$$ \begin{array}{ll}
			V_{(0,0)} (B_b \perp) & = \inf_{s=(x,y)\in S} \max\{1-r_b((0,0), (x,y)), V_{(x,y)}(\perp)\} \\
			& =  \inf \{\max \{0.1, 0\}, \max\{0.19, 0\}, \max\{0.28, 0\}\} \\
			& = 0.1,
		\end{array} $$
	
 and so $V_{(0,0)}(\neg B_b \perp) = 0$
\item[(2)]We have  $(M',(1,1))\nvDash B_b m_a\rightarrow B_bB_b m_a$. 
The computation of $V_{(1,1)} (B_b B_b m_a)$ requires the value of $V_{(x,y)} (B_b m_a)$ for any pair $(x,y)$.
We therefore first compute those, with the computation of $V_{(0,0)} (B_b m_a)$ as an example. The required
value of $V_{(1,1)} (B_b m_a)$ is then another special case.

$$\begin{array}{ll}
	V_{(0,0)}(B_b m_a)  & = \inf_{s=(x,y)\in S}\max\{1-r_b((0,0),(x,y)), V_{(x,y)}(m_a)\}\\ 
	& = \inf_{s=(x,y)\in S} \{\\
	& \qquad \max \{1-r_b((0,0), (0,y)), V_{(0,y)}(m_a)\}\\ 
	& \qquad \max \{1-r_b((0,0), (0.5,y)), V_{(0.5,y)}(m_a)\}\\
	& \qquad \max \{1-r_b((0,0), (1,y)), V_{(1,y)}(m_a)\}\}\\
	& = \inf\{ \max\{0.1, 0\}, \max\{0.19, 0.5\}, \max\{0.28, 1\} \} \\
	& = 0.1.
\end{array}$$
By similar computation for other states we have:

\begin{itemize}
	\item[] $V_{(0,0)}(B_b m_a)=V_{(0,0.5)}(B_b m_a)=V_{(0,1)}(B_b m_a)=0.1,$
	\item[]$V_{(0.5,0)}(B_b m_a)=V_{(0.5,0.5)}(B_b m_a)=V_{(0.5,1)}(B_b m_a)=0.19,$
	\item[]$V_{(1,0)}(B_b m_a)=V_{(1,0.5)}(B_b m_a)=V_{(1,1)}(B_b m_a)=0.28.$
\end{itemize}

Therefore,
$$\begin{array}{ll}
	V_{(1,1)}(B_bB_b m_a) & = \inf_{s=(x,y)\in S} \max\{1-r_b((1,1),(x,y)), B_b m_a\} \\
	& = \inf_{s=(x,y)\in S} \{\\
	&\qquad \max\{0.1, V_{(1,y')} (m_a) \},\\
	&\qquad \max\{0.19, V_{(0.5,y')} (m_a) \},\\
	&\qquad \max\{0.28, V_{(0,y')}(m_a) \}\}\\
	& = \inf \{ \max\{0.1, 0.28\}, \max\{0.19, 0.19\}, \max\{0.28, 0.1\}\} \\
	& = 0.19.
\end{array}$$
 Now, since 
$$V_{(1,1)}(B_b m_a)>V_{(1,1)}(B_bB_b m_a),$$ by definition, it can be obtained that 
$$V_{(1,1)}(B_b m_a\rightarrow B_bB_b m_a) = V_{(1,1)}(B_bB_b m_a)  = 0.19.$$
	\end{itemize} 

Similarly, we provide counterexamples that refute the validity of scheme (3):  
\begin{itemize}
	\item[(3)] We have $V_{(0,0)} (m_a) = \pi((0,0), m_a) = 0$. Since $V_{(0,0)} (B_b m_a) \leq V_{(0,0)} (m_a)$. Thus	
	$V_{(0,0)} (B_b m_a \rightarrow m_a)=0$.
\end{itemize}\vspace{-0.6cm}
\end{proof}

\begin{proposition}\label{sound} Let $\varphi$ be a formula. The following formulas are valid. 
\vspace{0.2cm}
\\
$~~(i$) $(B\varphi\wedge B(\varphi\rightarrow\psi))\rightarrow B\psi$;\\
$~~(ii$) $B(\varphi\rightarrow\psi)\rightarrow(B\varphi\rightarrow B\psi)$\\
$~~(iii$) $B(\varphi\wedge\psi)\leftrightarrow (B\varphi\wedge B\psi)$;\\
$~~(iv$) $(B\varphi\wedge B\neg\varphi)\leftrightarrow B\bot$.
\end{proposition}

\begin{proof} 
(i): Suppose that  $M=(S, r,\pi)$ is a model and $s\in S$. We show that $(B\varphi\wedge B(\varphi\rightarrow\psi))\rightarrow B\psi$ is true in the pointed model $(M,s)$.  Let  $\Gamma=\{s'\in S\mid V_{s'}(\varphi)> V_{s'}(\psi)\}$, then
if $\Gamma=\phi$, we have $V_s(B\varphi\wedge B(\varphi\rightarrow\psi))=V_s(B\varphi)$ and also
$(\forall t\in S) (V_t(\varphi)\leq V_t(\psi))$.
So, we can conclude that $(\forall s\in S) (V_s(B\varphi)\leq V_s(B\psi))$. Hence, 
$V_s(B\varphi\wedge B(\varphi\rightarrow\psi))\leq V_s(B\psi)$ and so 
$V_s((B\varphi\wedge B(\varphi\rightarrow\psi))\rightarrow B\psi)=1$. Otherwise, if $\Gamma\neq\phi$, 
 then
\begin{align*}
	V_s&(B\varphi\wedge B(\varphi\rightarrow\psi))\\
	&=\min\bigg\{\min\Big\{\inf_{s'\in \Gamma}\max\{1-r(s,s'),V_{s'}(\varphi)\},\inf_{s'\in \Gamma^c}\max\{1-r(s,s'),V_{s'}(\varphi)\}\Big\},\\
	&\qquad\quad\inf_{s'\in \Gamma}\max\{1-r(s,s'),V_{s'}(\psi)\}\bigg\}\\
	&=\min\bigg\{\inf_{s'\in \Gamma}\max\{1-r(s,s'),V_{s'}(\varphi)\},\inf_{s'\in \Gamma^c}\max\{1-r(s,s'),V_{s'}(\varphi)\},\\ 
	&\qquad\quad\inf_{s'\in \Gamma}\max\{1-r(s,s'),V_{s'}(\psi)\}\bigg\}.
\end{align*}
Now, if $s'\in\Gamma$, then $V_{s'}(\psi)<V_{s'}(\varphi)$ and so 
$$\inf_{s'\in \Gamma}\max\{1-r(s,s'),V_{s'}(\psi)\}\leq\inf_{s'\in \Gamma}\max\{1-r(s,s'),V_{s'}(\varphi)\} \vspace{-0.2cm}$$
Therefore,
\begin{align*}
V_s&(B\varphi\wedge B(\varphi\rightarrow\psi))=\\
&\min\Big\{\inf_{s'\in \Gamma}\max\{1-r(s,s'),V_{s'}(\psi)\},\inf_{s'\in \Gamma^c}\max\{1-r(s,s'),V_{s'}(\varphi)\}\Big\}.\nonumber
\end{align*}
Also, if $s'\in \Gamma^c$, then $V_{s'}(\varphi)\leq V_{s'}(\psi)$  and so 
$$\inf_{s'\in \Gamma^c}\max\{1-r(s,s'),V_{s'}(\varphi)\}\leq\inf_{s'\in \Gamma^c}\max\{1-r(s,s'),V_{s'}(\psi)\}. \vspace{-0.2cm}$$
Consequently, we can conclude that $V_s(B\varphi\wedge B(\varphi\rightarrow\psi))\leq V_s(B\psi)$,
which completes the proof.\\

(ii): This is equivalent to (i) by (G2).

(iii): Suppose that $M=(S, r_{},\pi)$  is an arbitrary model and $s\in S$. In order to show that 
$$V_s(B (\varphi \wedge \psi) \leftrightarrow (B \varphi \wedge B \psi)) =1,$$
we prove that $V_s(B (\varphi \wedge \psi)) = V_s(B \varphi \wedge B \psi)$ in two cases: \\
case1: The inequality $V_s(B (\varphi \wedge \psi)) \leq V_s(B \varphi \wedge B \psi)$ follows from $V_s(B(\varphi\wedge\psi))\leq V_s(B\varphi)$, $V_s(B(\varphi\wedge\psi))\leq V_s(B\psi)$ and so $V_s(B(\varphi\wedge\psi)\leq \min\{V_s(B\varphi),V_s(B\psi)\}$.\\
case2: 
 By definition, we have
\begin{align*}
V_s(B(\varphi \wedge \psi))& = \inf_{s'\in S} \max\{1-r(s,s'),\min\{V_{s'}(\varphi), V_{s'}(\psi)\}\},\ \text{and}\\
V_s(B\varphi\wedge B\psi)& =\min\{\inf_{s'\in S}\max\{1-r(s,s'),V_{s'}(\varphi)\},\inf_{s'\in S}\max\{1-r(s,s'),V_{s'}(\psi)\}\}.
\end{align*}
Now, if we define $\Gamma=\{s'\in S\mid V_{s'}(\varphi)> V_{s'}(\psi)$, the definition of $V_s(B(\varphi\wedge\psi))$ can be rewriten as:
$$V_s(B(\varphi\wedge\psi))=\min\{\inf_{s'\in \Gamma}\max\{1-r(s,s'),V_{s'}(\psi)\},\inf_{s'\in \Gamma^{c}}\max\{1-r(s,s'),V_{s'}(\varphi)\}\}.$$
Then using the facts:
\begin{align*}
\inf_{s'\in S}\max\{1-r(s,s'),V_{s'}(\psi)\}&\leq\inf_{s'\in\Gamma}\max\{1-r(s,s'),V_{s'}(\psi)\},\ \text{and}\\
\inf_{s'\in S}\max\{1-r(s,s'),V_{s'}(\varphi)\}&\leq\inf_{s'\in\Gamma^{c}}\max\{1-r(s,s'),V_{s'}(\varphi)\},
\end{align*}
so  we can conclude
$$V_s(B\varphi\wedge B\psi)\leq V_s(B(\varphi\wedge\psi)).$$

(iv): Using (iii), we know that $V_s(B\varphi\wedge B\neg\varphi)=V_s(B(\varphi\wedge\neg\varphi))$ then by (ii) we have
$$V_s(B((\varphi\wedge\neg\varphi)\leftrightarrow\bot)\rightarrow(B(\varphi\wedge\neg\varphi)\leftrightarrow B\bot))=1.$$
Now since, by (GT8), $V_s(B((\varphi\wedge\neg\varphi)\leftrightarrow\bot))=1$
then we can conclude $V_s(B(\varphi\wedge\neg\varphi)\leftrightarrow B\bot)=1$
\end{proof}

		\vspace{-0.5cm}
		\subsection{The proof systems}
		
		First, we introduce a fuzzy version of the classical axiomatic system $\textbf{K}$ and denote  it as $\textbf{K}_\textbf{F}$. Next, we propose two extensions, namely $\textbf{B}_\textbf{F}$ and $\textbf{T}_\textbf{F}$, based on $\textbf{K}_\textbf{F}$. In the context of this discussion, let $\varphi$ and $\psi$ represent formulas. The following axiom schemas and inference rules are considered:
	\begin{eqnarray*}
		&\textbf{(A)}& \text{all instantiations of the tautologies of propositional G\"odel logic}\\
		&\textbf{(K)} & B_a(\varphi\rightarrow\psi)\rightarrow (B_a\varphi\rightarrow B_a\psi)\\
		&\textbf{(D)} & \neg B_a\bot\\
		&\textbf{(4)} & B_a\varphi\rightarrow B_a B_a\varphi\\
		&\textbf{(T)} & B_a\varphi\rightarrow\varphi\\
		&\textbf{(R1)} & \dis\frac{\varphi\qquad\varphi\rightarrow\psi}{\psi}\ (MP)\\
		&\textbf{(R2)} & \dis\frac{\varphi}{B_a\varphi}\ \ (\text{Necessitation of}\ B_a)
	\end{eqnarray*}

		Let $\textbf{K}_\textbf{F}$ denote an axiomatic system consisting of the axiom schemas \textbf{(A)} and \textbf{(K)}, along with the inference rules $\textbf{(R1)}$ and $\textbf{(R2)}$.
		We define $\textbf{B}_\textbf{F}$ as an extension of $\textbf{K}_\textbf{F}$ incorporating axioms \textbf{ (D)} and \textbf{(4)}, and we further define $\textbf{T}_\textbf{F}$ as an extension of $\textbf{B}_\textbf{F}$ including the axiom schema \textbf{(T)}.
		
		Similar to classical epistemic logic, we refer to axiom \textbf{ (D)} as \textit{consistent belief}, axiom \textbf{(4)} as \textit{positive introspection} and \textbf{(T)} as the \textit{Truth axiom}. It is worth noting that axioms \textbf{ (D)}, \textbf{(4)} and \textbf{(T)} are not valid. In Lemma \ref{A4}, we demonstrate that these axioms can be applied in serial, transitive and reflexive models, respectively. In other words, in a reflexive model, the belief $B$ can be interpreted as knowledge $K$.

	\begin{remark}
		From Proposition \ref{sound} part (iv), we can see that the axiom \textbf{(D)} is equivalent to $\neg(B\varphi\wedge B\neg\varphi)$.
	\end{remark}
	\begin{lemma} \label{A4}
		Let $M=(S, r,\pi)$ be a model.
		\begin{itemize}[nolistsep]

			\item[(i)] If $M$ is a serial model, then \textbf{(D)} is true in $M$.
			\item[(ii)] If $M$ is a transitive model, then \textbf{(4)} is true in $M$.
			\item[(iii)] If $M$ is a reflexive model, then \textbf{(T)} is true in $M$.
		\end{itemize}
	\end{lemma}

	\begin{proof}

	(i). Let $s\in S$. Then by assumption there is a $s'\in S$ such that $r(s,s')=1.$ Thus $\max\{1-r(s,s'), V_{s'}(\bot)\}=0$, and hence $\inf_{s'\in S}\max\{1-r(s,s'),V_{s'}(\bot)\}=0$, which means that $V_s(\neg B\bot)=1$ .

	(ii)
	Recall that if $M$ is transitive then for all $a\in \mathcal{A}$ and all $s,s',s'' \in S$ we have $r(s,s'')\geq \min\{r(s,s'), r(s',s'')\}$. It follows that
	\begin{equation}
		1-r(s,s'') \leq \max \{1-r(s,s'), 1-r(s',s'')\}.
	\end{equation}	 
	By definition we have $V_s(B\varphi) = \inf_{s'\in S}\max\{1-r(s,s'), V_{s'}(\varphi)\}$. Thus
	\begin{eqnarray}
		V_s(B B \varphi) && = \inf_{s'\in S}\max\{1-r(s,s'), V_{s'}(B \varphi)\}\nonumber\\
		&&=  \inf_{s''\in S}\max\{1-r(s,s'), \inf_{s''\in S}\max\{1-r(s',s''), V_{s''}(\varphi)\}\}		\label{eq_000}
	\end{eqnarray}
	Therefore from \ref{eq_000}, it is clear that $V_s(B B \varphi)$ is equal to one of the followings:\\
		\textbf{case 1.} Suppose that $V_s(B B \varphi) = V_{s_1}(\varphi)$, where $s_1$ is one of the states $s''$ in (\ref{eq_000}). Then $V_{s_1}(\varphi) \geq 1-r(s_0,s_1)$, where $s_0$ is one of the states $s'$ appeared in (\ref{eq_000}) (see figure \ref{fig_trans_prop.}). Also, $V_{s_1}(\varphi)\geq  1-r(s, s_0)$. By transitivity $1-r(s,s_1) \leq \max\{1-r(s,s_0), 1-r(s_0, s_1)\}$. Thus $V_{s_1}(\varphi)\geq 1-r(s,s_1)$, and so $V_{s_1}(\varphi)$ is one of the terms of  minimization in the formulation of  $V_s(B \varphi)$. Therefore $V_s(B \varphi) \leq V_s(B B \varphi)$.\\
		\textbf{case 2.}  Assume $V_s(B B \varphi) = 1-r(s_0, s_1)$, where $s_0$ and $s_1$ are instances of the states  $s'$ and $s''$ appeared in (\ref{eq_000}), respectively.  We have
		\begin{equation} \label{eq_001}
			1-r(s_0,s_1) \geq V_{s_1}(\varphi)
		\end{equation}
		and $1-r(s_0,s_1)\geq 1-r(s, s_0)$. Also, by transitivity we have $1-r(s,s_1) \leq \max\{1-r(s,s_0) , 1-r(s_0, s_1)\}$. Thus
		\begin{equation}\label{eq_002}
			1-r(s,s_1) \leq 1-r(s_0,s_1)
		\end{equation}
		Since $\max\{1-r(s,s_1), V_{s_1}(\varphi)\}$ is one of the terms that appears in the definition of $V_s(B \varphi)$ we have $V_s(B \varphi) \leq \max\{1-r(s,s_1), V_{s_1}(\varphi)\}$.
		Thus from (\ref{eq_001}) and (\ref{eq_002}) and the fact that $V_s(B B \varphi) = 1-r(s_0, s_1)$, we have $V_s(B \varphi) \leq V_s(B B \varphi)$. \\
		\textbf{case 3.} Let $V_s(B B \varphi) = 1-r(s,s_0)$, where $s_0$ is one of the states  $s'$ that appeared in (\ref{eq_000}). So, there is a state $s_1$ which 
		\begin{equation}\label{eq_003}
			\max\{1-r(s_0,s_1), V_{s_1}(\varphi)\} \leq 1-r(s,s_0)
		\end{equation}
		From transitivity we have $1-r(s,s_1) \leq \max\{1-r(s,s_0), 1-r(s_0, s_1)\}$,
		which from \ref{eq_002} yields 
		\begin{equation}\label{eq_004}
			1-r(s,s_1) \geq  1-r(s, s_0).	
		\end{equation}
		By definition, we have $V_s(B \varphi) \leq \max\{1-r(s,s_1), V_{s_1}(\varphi)\}$, and so from \ref{eq_003} and \ref{eq_004} we have $V_s(B \varphi) \leq 1-r(s,s_0) = V_s(B B \varphi)$.
				
	The other cases in which the value of $B$ obtains in the infimum will follow similar to above disscused cases.
		
	\begin{figure}\centering
		\resizebox*{3cm}{!}{
			\begin{tikzpicture}
				[->,shorten >=1pt,auto, thick, main node/.style={circle,fill=yellow!10,draw, font=\sffamily\Large\bfseries,minimum size=10mm}]
				\node[main node] (v2) at (0.5,1.5) {$s_0$};
				\node[main node] (v1) at (-1,-0.5) {$s_1$};
				\node[main node] (v3) at (2,-0.5) {$s$};
				\draw  (v2) edge (v1);
				\draw  (v3) edge (v2);
				\draw  (v1) edge (v3);			
			\end{tikzpicture}
		}
		\caption{A diagram of possible relation between the state $s_0$ and other states }\label{fig_trans_prop.}
	\end{figure}
			
	 (iii) Suppose that $r$ is reflexive and $s\in S$ is an arbitrary state. We show that $V_s(B \varphi)\leq V_s(\varphi)$. Recall that from the definition we have 
			$V_s(B \varphi) = \inf_{s' \in S}\max\{1-r(s,s'), V_{s'}(\varphi)\}$.
			Since $s\in S$, we have
			$$\max\{1-r(s,s), V_{s'}(\varphi)\} \geq \inf_{s'\in S}\max\{1-r(s,s'), V_{s'}(\varphi)\}.$$
			From reflexivity $1-r(s,s) = 0$, and so 
			$$\max\{0, V_{s}(\varphi)\} \geq \inf_{s'\in S}\max\{1-r(s,s'), V_{s'}(\varphi)\},$$
			which means that
			$V_{s}(\varphi) \geq V_s(B \varphi)$.
	\end{proof}
\begin{remark} \label{R2}
Unless otherwise specified, we will use the symbol \textbf{A} to represent any of the systems $\textbf{K}_\textbf{F}$, $\textbf{B}_\textbf{F}$, or $\textbf{T}_\textbf{F}$ in the following.
\end{remark}

\begin{definition}\label{def:derivation}
	A formula $\varphi$ is derived from a set of formulas $\Gamma$ in the axiomatic system \textbf{A}, denoted by $\Gamma\vdash_\textbf{A}\varphi$, if there exists a sequence of formulas $\varphi_1, \cdots, \varphi_n$ such that $\varphi_n = \varphi$, and for all $i\leq n$, each $\varphi_i$ is either an element of $\Gamma$, an axiom of \textbf{A}, or derived using rules \textbf{(R1)} and \textbf{(R2)} with the restriction that \textbf{(R2)} cannot be applied to non-valid formulas.
	\\
	In the case where the system \textbf{A} is $\textbf{K}_\textbf{F}$, we simply write $\Gamma\vdash\varphi$.

	\end{definition}

	\begin{definition}\label{consistency} Let $\varphi, \varphi_1,\ldots,\varphi_n$ be formulas in EGL-language.
		\begin{enumerate}
			\item $\varphi$ is \textbf{A}-consistent if $\nvdash_{\textbf{A}}\neg\varphi$,
			\item A finite set $\{\varphi_1,\ldots,\varphi_n\}$ is \textbf{A}-consistent if  $\varphi_1\wedge\ldots\wedge\varphi_n$ is \textbf{A}-consistent,
			\item An infinite set $\Gamma$ of formulas is \textbf{A}-consistent if any finite subset of $\Gamma$ is \textbf{A}-consistent,
			\item A formula or a set of formulas is called \textbf{A}-inconsistent if it is not \textbf{A}-consistent,
			\item A set $\Gamma$ of formulas is maximal \textbf{A}-consistent if:
			\begin{enumerate}
				\item $\Gamma$ is \textbf{A}-consistent,
				\item $\Gamma\cup\{\psi\}$ is \textbf{A}-inconsistent for any formula $\psi\notin\Gamma$.
			\end{enumerate}
		\end{enumerate}

	\end{definition}

		In cases where there is no ambiguity, we use the terms ``consistent" and ``inconsistent" instead of \textbf{A}-consistent and \textbf{A}-inconsistent, respectively.
	\begin{lemma}\label{maximallyConsistent}

		(1) Every consistent set of formulas can be extended to a maximal consistent set. (2) If $\Gamma$ is a maximal consistent set of formulas, then the following statements hold for all formulas $\varphi$, $\psi$:\\
		\hspace*{0.5cm}(a) either $\varphi\in\Gamma$ or $\neg\varphi\in\Gamma$,\\
		\hspace*{0.5cm}(b) $\varphi\wedge\psi\in\Gamma\ \Leftrightarrow\ \varphi\in\Gamma\ \text{and}\ \psi\in\Gamma$,\\
		\hspace*{0.5cm}(c) if $\varphi, ~\varphi\rightarrow\psi\in\Gamma$ then $\psi\in\Gamma$,\\
		\hspace*{0.5cm}(d) $\Gamma$ is closed under deduction, i.e. if $\Gamma\vdash\varphi$ then $\varphi\in\Gamma$.
	\end{lemma}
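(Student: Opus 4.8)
The plan is to obtain (1) by a Lindenbaum-style extension and then to read off (2)(a)--(d) from maximality together with the G\"odel theorems (G2), (GT2), (GT3), (GT6) and (A1). For (1), the key observation is that consistency has \emph{finite character}: by Definition~\ref{consistency} a set is consistent exactly when each of its finite subsets is, so the union of any $\subseteq$-chain of consistent sets is consistent (a finite subset of the union already sits inside one member of the chain). Ordering the consistent supersets of the given consistent $\Gamma$ by inclusion, every chain has an upper bound, so Zorn's Lemma produces a maximal element $\Gamma^{*}$; if one prefers the countable case, enumerate the formulas as $\psi_0,\psi_1,\dots$ and set $\Gamma_{n+1}=\Gamma_n\cup\{\psi_n\}$ when this remains consistent and $\Gamma_{n+1}=\Gamma_n$ otherwise, with $\Gamma^{*}=\bigcup_n\Gamma_n$. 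That $\Gamma^{*}$ satisfies the maximality clause of Definition~\ref{consistency} is immediate from monotonicity of inconsistency: an inconsistent finite set stays inconsistent inside any larger set, so $\psi\notin\Gamma^{*}$ forces $\Gamma^{*}\cup\{\psi\}$ inconsistent.

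The substantive part is the negation-completeness (2)(a), and I would prove it by contradiction. Assume $\varphi\notin\Gamma$ and $\neg\varphi\notin\Gamma$. Maximality supplies finite subsets of $\Gamma$ with conjunctions $\gamma,\delta$ for which $\vdash\neg(\gamma\wedge\varphi)$ and $\vdash\neg(\delta\wedge\neg\varphi)$; applying (GT3) to each gives $\vdash\gamma\rightarrow\neg\varphi$ and $\vdash\delta\rightarrow\neg\neg\varphi$. Setting $\eta=\gamma\wedge\delta$ (still a conjunction of members of $\Gamma$) and using transitivity of implication, I get $\vdash\eta\rightarrow\neg\varphi$ and $\vdash\eta\rightarrow\neg\neg\varphi$, hence $\vdash\eta\rightarrow(\neg\varphi\wedge\neg\neg\varphi)$. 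Since negations are crisp, $\neg(\neg\varphi\wedge\neg\neg\varphi)$ is a propositional G\"odel tautology, so (A1) gives $\vdash\neg(\neg\varphi\wedge\neg\neg\varphi)$; contraposing via (GT2) and applying (R1) yields $\vdash\neg\eta$, i.e. the chosen finite subset of $\Gamma$ is inconsistent --- contradicting the consistency of $\Gamma$. The degenerate cases in which $\gamma$ or $\delta$ comes from an empty subset are absorbed the same way (using that $D$ itself is consistent), since, for instance, an empty $\delta$ just replaces $\vdash\delta\rightarrow\neg\neg\varphi$ by $\vdash\neg\neg\varphi$ and still forces $\vdash\neg\gamma$ through (GT2).

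With (a) in hand the rest is routine, proved in the order (c), (b), (d). For (c): if $\varphi,\varphi\rightarrow\psi\in\Gamma$ but $\psi\notin\Gamma$, then $\neg\psi\in\Gamma$ by (a), while $(\varphi\wedge(\varphi\rightarrow\psi))\rightarrow\psi$ is a tautology, so the conjunction $\varphi\wedge(\varphi\rightarrow\psi)\wedge\neg\psi$ of members of $\Gamma$ has a provable negation, contradicting consistency. For (b): the forward direction uses (G2) with (c), and the backward direction uses the tautology $\varphi\rightarrow(\psi\rightarrow(\varphi\wedge\psi))$ with two applications of (c). For (d): I first note that every theorem of $D$ lies in $\Gamma$ (if $\vdash\chi$ but $\chi\notin\Gamma$ then $\neg\chi\in\Gamma$ by (a), whereas (GT6) gives $\vdash\neg\neg\chi$, making $\{\neg\chi\}$ inconsistent); then, expressing a derivation $\Gamma\vdash\varphi$ through the deduction theorem as $\vdash(\gamma_1\wedge\dots\wedge\gamma_n)\rightarrow\varphi$ with each $\gamma_i\in\Gamma$, I place the antecedent in $\Gamma$ by (b), the implication in $\Gamma$ as a theorem, and conclude $\varphi\in\Gamma$ by (c).

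I expect (2)(a) to be the main obstacle: a G\"odel-consistent set is \emph{not} negation-complete for purely classical reasons, and the argument essentially needs the crispness of negation, entering through the tautology $\neg(\neg\varphi\wedge\neg\neg\varphi)$ together with (GT2) and (GT3). A secondary delicate point is the deduction theorem used in (d); it is available over the propositional G\"odel base because conjunction is idempotent, but it forces the convention that the necessitation rule (R2) is applied only to theorems within a derivation from assumptions (otherwise $\varphi\vdash B_a\varphi$ would break deductive closure). I would state this convention explicitly as the intended reading of \emph{derivation}.
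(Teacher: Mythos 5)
Your proof is correct, and it is essentially the proof the paper intends: the paper's entire argument for this lemma is the single line ``similar to the proof of Lemma 1.4.3 of \cite{Meyer}'', i.e.\ the classical Lindenbaum construction together with the standard properties of maximal consistent sets, which is exactly what you carry out, adapted to the G\"odel setting. What you add beyond that citation is precisely what it glosses over: in (2)(a) one cannot argue via excluded middle, and your replacement --- (GT3) and (GT2) together with the G\"odel tautology $\neg(\neg\varphi\wedge\neg\neg\varphi)$, available as an instance of (A1) --- is valid; and your convention on derivations is not optional, since under the reading in which (R2) may be applied to assumptions, part (d) is outright false (by soundness $\{p,\neg B_a p\}$ is consistent, hence extends to a maximally consistent $\Gamma$, yet $p\vdash B_a p$ would then force $B_a p\in\Gamma$, an inconsistency), so the paper's ``defined naturally'' must be read as you stipulate for the lemma, and for the later canonical-model construction, to survive. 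One presentational slip: your proof of (b) applies (c) to the theorem instance (G2), which presupposes that every theorem of $D$ belongs to $\Gamma$ --- a fact you establish only inside (d); since that fact uses only (a), (GT6) and consistency of $\Gamma$, simply state it immediately after (a), and your ordering (c), (b), (d) then goes through without circularity.
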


	\begin{proof}
Despite the fact that values of formulas range over  the interval $[0,1]$, the proof follows a similar approach to that of Lemma 1.4.3 in \cite{Meyer}.
	\end{proof}

	\section{Soundness and semi-completeness}
In this section, we establish the soundness and semi-completeness of the axiomatic systems $\textbf{K}_\textbf{F}$, $\textbf{B}_\textbf{F}$, and $\textbf{T}_\textbf{F}$ with respect to the corresponding classes of models. First, we provide some definitions:
	\begin{definition}
	Let $\mathcal{M}$ be a class of models. A  system \textbf{A} is called:
	\begin{itemize}
		
		\item[$\bullet$] \textit{sound} with respect to $\mathcal{M}$: if $\vdash_{\textbf{A}}\varphi$ then $\mathcal{M}\vDash\varphi$,
		\item[$\bullet$]\textit{complete} with respect to $\mathcal{M}$: if $\mathcal{M}\vDash\varphi$ then $\vdash_{\textbf{A}}\varphi$,
		\item[$\bullet$] \textit{semi-complete} with respect to $\mathcal{M}$: if $\mathcal{M}\vDash\varphi$ then $\vdash_{\textbf{A}}\neg\neg\varphi$.
	\end{itemize}
	\end{definition}

	\begin{lemma}\label{rule}
		The inference rules $\textbf{(R1)}$ and $\textbf{(R2)}$ are sound,  i.e. if the premises of $\textbf{(R1)}$ and $\textbf{(R2)}$ are valid, then their conclusions are also valid.  \vspace{-0.5cm} 
	\end{lemma}
	\begin{proof}
		The proof is obvious.
	\end{proof}

	\begin{theorem}\label{soundness}\textbf{(Soundness)}
		\begin{itemize}[nolistsep]
			\item[(1)] $\textbf{K}_\textbf{F}$ is sound  with respect to the class of all models.
			\item[(2)] $\textbf{B}_\textbf{F}$ is sound  with respect to the class of all  serial and transitive models.
			\item[(3)] $\textbf{T}_\textbf{F}$ is sound with respect to the class of all  serial, transitive and reflexive models.
		\end{itemize}
	\end{theorem}

	\begin{proof}
		The result can be obtained through a straightforward application of Proposition \ref{sound}, Lemma \ref{A4}, and Lemma \ref{rule}.
	\end{proof}
		
	Hereafter, we will adopt the following abbreviations:
	\begin{itemize}
		\item ``$\exists M \,\exists s$" is an abbreviation for ``there exists a model $M=(S, r_{} ,\pi)$ and there is some $s\in S$"
		\item ``$\forall M \, \forall s$" is an abbreviation for ``for all model $M=(S, r_{} ,\pi)$ and for all states $s\in S$"
	\end{itemize}

	Furthermore, to emphasize that we are referring to a specific model $M$, we will employ superscripts such as $S^M$, $\pi^M$, $r^M$, and $V_s^M$.

	\vskip 0.1cm
	\begin{lemma}(\textbf{Model Existence Lemma})\label{model_existent_lemma} 
		Let $\varphi$ be a  formula. Then the following statements are equivalent:
		\begin{enumerate}

			\item If $\vDash\varphi$, then $\vdash\neg\neg\varphi$,
			\item  if $\nvdash\neg\neg\varphi$ then there is a model $M=(S, r ,\pi)$ and a state $s\in S$ such that $(M,s)\nvDash \varphi$,
			\item  if $\neg\neg\varphi$ is consistent then it is satisfiable, i.e. there is a model  $M=(S, r ,\pi)$ and a state $s\in S$ such that $(M,s)\vDash \neg\neg\varphi$.
		\end{enumerate}
	\vspace{-0.5cm}
	\end{lemma}
		
	\begin{proof}
		Obviously (1) and (2) are  equivalent. \\
		(2)$\Rightarrow$(3): We restate  statements (2) and (3) as follows, respectively:
		$$\nvdash\neg\neg\varphi\ \Longrightarrow\ \exists M\ \exists s\ V_s(\varphi)\neq 1, $$
		$$\nvdash\neg\varphi\ \Longrightarrow\ \exists M\ \exists s\ V_s(\neg\neg\varphi)=1. $$
		Assume that (2) holds, then
		$$\nvdash\neg\neg\varphi\ \Longrightarrow\ \exists M\ \exists s\ V_s(\neg\varphi)=\left\{
		\begin{array}{lr}
			0& \;\;0<V_s(\varphi)<1\\
			1& \;\;V_s(\varphi)=0
		\end{array}\right..
	 $$
 	replacing $\varphi$ by $\neg\varphi$, we have
	$$\nvdash\neg\neg\neg\varphi\ \Longrightarrow\ \exists M\ \exists s\ V_s(\neg\neg\varphi)=\left\{
	\begin{array}{lr}

		0& \;\;0<V_s(\neg\varphi)<1\\
		1& \;\;V_s(\neg\varphi)=0
	\end{array}\right..$$
		However, since $\neg\varphi$ takes only crisp values, the case  $0<V_s(\neg\varphi)<1$ never happens,
		therefore,
		$$\nvdash \neg\varphi\ \Longrightarrow\ \exists M\ \exists s\ V_s(\neg\neg\varphi)=1.\vspace*{-0.1cm}$$
		(3) $\Rightarrow$(2):
		Assume that (3) holds. Then
		$\nvdash \neg\varphi$ implies $\exists M\ \exists s\ V_s(\neg\varphi)=0$, and
		if we replace $\varphi$ by $\neg\varphi$, we have
		$$\nvdash\neg\neg\varphi\ \Longrightarrow\ \exists M\ \exists s\ V_s(\neg\neg\varphi)=0.$$
		Therefore,
		$\nvdash\neg\neg\varphi$ implies that $\exists M\ \exists s\ V_s(\varphi)\neq 1$,
		which completes the proof.
	\end{proof}

\begin{lemma}\label{lem:consistency of axiom 6}
	$\Gamma_0 = \{\neg\neg B \nn \varphi \rightarrow \nn B \varphi \mid \varphi \text{ is a formula}\}$ is a consistent set of formulas.
\end{lemma}
\begin{proof}
	For the sake of contradiction assume $\Gamma_0$ is inconsistent and so there  are $\varphi_1,\cdots, \varphi_n \in \Gamma_0$ such that $\vdash \neg (F(\varphi_1)\wedge \cdots\wedge F(\varphi_n))$ in which  $F(\varphi_i) = \nn B \nn \varphi_i\rightarrow \nn B\varphi_i$. Using soundness we have $\vDash \neg (F(\varphi_1)\wedge \cdots\wedge F(\varphi_n)),$ which means that $\forall M\; \forall s \; V_s^{M}(\neg (F(\varphi_1)\wedge \cdots\wedge F(\varphi_n)))=1.$ Hence $\forall M \;\forall s\; V_s^M(F(\varphi_1)\wedge \cdots\wedge F(\varphi_n))=0$, and so $\forall M\; \forall s$ there is a  $ \varphi_i$
	such that $ V_s^M(F(\varphi_i)) = 0$. But this is not true since by Proposition \ref{axim6isvalid}, if $M^f$ is a finite model, then for all $s\in S^{M^f}$ and all $ \varphi_i \,(1\leq i\leq n)$ we have
	  $V_s^{M^f}(F(\varphi_i))=1,$  which is a contradiction.
\end{proof}
\begin{remark}
	Although in section \ref{section:finite_model_property} we will see that $\nn B \nn \varphi \rightarrow \nn B \varphi$ is not valid, Lemma \ref{lem:consistency of axiom 6} shows that the set $\Gamma_0$ is a consistent set of formulas.
\end{remark}

		\begin{theorem}\label{weakcompleteness} \textbf{(Semi-Completeness)}
			$\textbf{K}_\textbf{F}$ is semi-complete with respect to the class of all models. \vspace{-0.3cm}
		\end{theorem}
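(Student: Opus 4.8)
The plan is to reduce the claim to part (2) of the preceding Model Existence Lemma and then to refute any non-theorem $\varphi$ in a single \emph{canonical model}. First I would record the key structural fact that makes a classical-style argument available: by part~(a) of Lemma~\ref{maximallyConsistent} every maximally consistent set $\Gamma$ decides each formula, since exactly one of $\psi,\neg\psi$ lies in $\Gamma$ (both would force $\bot\in\Gamma$). Consequently $\neg\neg\psi\in\Gamma$ iff $\psi\in\Gamma$: the implication $\psi\to\neg\neg\psi$ is (GT6), while $\neg\neg\psi\in\Gamma$ forces $\neg\psi\notin\Gamma$ and hence $\psi\in\Gamma$ by part~(a). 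Thus at the level of maximally consistent sets the logic behaves classically, which is precisely the feature the double negation in the statement is meant to capture.

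Next I would build a \emph{crisp} canonical model $M^{c}=(S^{c},\{r^{c}_{a}\}_{a\in\mathcal{A}},\pi^{c})$, taking $S^{c}$ to be the collection of all maximally consistent sets, $\pi^{c}(\Gamma,p)=1$ if $p\in\Gamma$ and $0$ otherwise, and $r^{c}_{a}(\Gamma,\Delta)=1$ if $\{\chi\mid B_a\chi\in\Gamma\}\subseteq\Delta$ and $0$ otherwise. Since all values lie in $\{0,1\}\subseteq[0,1]$, this is a legitimate EGL-model in which every formula receives a crisp value. The core is the Truth Lemma $V_\Gamma(\psi)=1\iff\psi\in\Gamma$, proved by induction on $\psi$. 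The atomic, $\bot$, $\wedge$ and $\neg$ cases are immediate from the definitions together with parts~(a),(b) of Lemma~\ref{maximallyConsistent}; the $\rightarrow$ case reduces to the classical equivalence $\chi_1\to\chi_2\in\Gamma$ iff ($\chi_1\in\Gamma$ implies $\chi_2\in\Gamma$), which follows from the propositional base (A1) together with parts~(a),(c) of Lemma~\ref{maximallyConsistent}.

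The delicate case, and what I expect to be the main obstacle, is $\psi=B_a\chi$. In the crisp model $V_\Gamma(B_a\chi)=\min\{V_\Delta(\chi)\mid r^{c}_{a}(\Gamma,\Delta)=1\}$, so by the induction hypothesis $V_\Gamma(B_a\chi)=1$ iff $\chi\in\Delta$ for every $\Delta$ with $\{\chi'\mid B_a\chi'\in\Gamma\}\subseteq\Delta$. The easy inclusion is that $B_a\chi\in\Gamma$ places $\chi$ in every such $\Delta$. For the converse I would prove an Existence Lemma: if $B_a\chi\notin\Gamma$ then $\{\chi'\mid B_a\chi'\in\Gamma\}\cup\{\neg\chi\}$ is consistent and hence, by Lemma~\ref{maximallyConsistent}(1), extends to some $\Delta\in S^{c}$ with $r^{c}_{a}(\Gamma,\Delta)=1$ and $\chi\notin\Delta$. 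Assuming inconsistency, the definition of a consistent set yields finitely many $\chi'_1,\dots,\chi'_n$ with $B_a\chi'_i\in\Gamma$ and $\vdash\neg(A\wedge\neg\chi)$, where $A=\chi'_1\wedge\dots\wedge\chi'_n$. By (GT3) this gives $\vdash A\to\neg\neg\chi$; applying (R2), Proposition~\ref{sound}(4) (distribution of $B_a$ over $\wedge$) and the normality encoded in (A2) then yields $B_a\neg\neg\chi\in\Gamma$. The crucial point is that this route only delivers $\neg\neg\chi$ rather than $\chi$, so to recover $B_a\chi$ I would pass through (GT6) to get $\neg\neg B_a\neg\neg\chi\in\Gamma$ and then apply axiom (A3) to get $\neg\neg B_a\chi\in\Gamma$, whence $B_a\chi\in\Gamma$ by the opening observation, contradicting $B_a\chi\notin\Gamma$. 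This is exactly where (A3) and the double negation are indispensable.

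Finally I would close the argument. Given $\nvdash\neg\neg\varphi$, the definition of consistency makes $\neg\varphi$ consistent, so it extends to a maximally consistent $\Gamma$ with $\neg\varphi\in\Gamma$ and therefore $\varphi\notin\Gamma$; by the Truth Lemma $V_\Gamma(\varphi)\neq1$, i.e.\ $(M^{c},\Gamma)\nvDash\varphi$, which is statement~(2) of the Model Existence Lemma and hence equivalent to the desired statement~(1). I expect the only routine-but-error-prone parts to be checking that $M^{c}$ is well defined and the bookkeeping of the normality/necessitation derivations from (A2) and (R2); the conceptual weight sits entirely in the $B_a$ case. (The $T_F$ analogue over reflexive models would follow the same template, using (A4) to force $\{\chi\mid B_a\chi\in\Gamma\}\subseteq\Gamma$ and hence $r^{c}_{a}(\Gamma,\Gamma)=1$.)
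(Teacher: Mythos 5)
Your proposal follows the same overall route as the paper: a canonical model whose states are the maximally consistent sets, crisp ($\{0,1\}$-valued) accessibility and valuation, a truth lemma proved by induction, an existence lemma for the $B_a$ case driven by (GT3), (R2), (A2), and the final appeal to axiom (A3) to collapse $B_a\neg\neg\chi$ back to $B_a\chi$ — exactly the paper's use of (A3). Your packaging is in fact cleaner: by first proving $\neg\neg\psi\in\Gamma\iff\psi\in\Gamma$ for maximally consistent $\Gamma$, you can define $r^c_a$ and $\pi^c$ without double negations and prove the truth lemma for \emph{all} formulas, whereas the paper defines $\Theta/B_a=\{\neg\neg\varphi\mid\neg\neg B_a\varphi\in\Theta\}$, sets $\pi^c(s_\Theta,p)=1$ iff $\neg\neg p\in\Theta$, and proves the truth lemma only in the double-negated form $V_{s_\Theta}(\neg\neg\varphi)=1\iff\neg\neg\varphi\in\Theta$. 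Given your opening observation the two constructions coincide, so this is a presentational improvement rather than a different proof.

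There is, however, one step that is not legitimate as written: you invoke Proposition~\ref{sound}(4) to move from $B_a\chi'_1,\dots,B_a\chi'_n\in\Gamma$ to $B_aA\in\Gamma$ with $A=\chi'_1\wedge\dots\wedge\chi'_n$. Proposition~\ref{sound}(4) is a \emph{semantic} statement (validity in all EGL-models), while membership in $\Gamma$ is governed by derivability in $B_F$ (Lemma~\ref{maximallyConsistent}(d) closes $\Gamma$ under $\vdash$, not under $\vDash$); inferring provability from validity is precisely the completeness direction you are in the middle of proving, so the citation is circular. The gap is repairable in two standard ways: either derive the syntactic distribution $\vdash B_a\varphi\wedge B_a\psi\rightarrow B_a(\varphi\wedge\psi)$ inside $B_F$ (from the G\"odel tautology $\varphi\rightarrow(\psi\rightarrow\varphi\wedge\psi)$, rule (R2), axiom (A2) and (G4)), or avoid conjunctions under $B_a$ altogether by currying $\vdash\neg(A\wedge\neg\chi)$ into the iterated implication $\vdash\chi'_1\rightarrow(\chi'_2\rightarrow\cdots(\chi'_n\rightarrow\neg\neg\chi)\cdots)$ and applying (R2) together with (A2) once per conjunct — the latter is exactly what the paper's proof does, which is why it never needs a distribution-over-$\wedge$ principle. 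With that one justification replaced, your argument goes through.
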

		\begin{proof}
			According to the Model Existence Lemma, it suffices to demonstrate that for each formula $\varphi$, if $\neg\neg\varphi$ is consistent, then $\neg\neg\varphi$ is satisfiable.
			 To achieve this, we construct a semi-canonical model denoted as $M^c=(S^c, r^c_{} ,\pi^c)$ using $\Gamma_0$ defined in Lemma \ref{lem:consistency of axiom 6} as follows:\\
		$~~~\bullet\ S^c=\left \lbrace s_{\Theta}\mid \ \begin{array}{l}\Theta
		\text{ is a maximal consistent set of formulas, and } 
		\Gamma_0 \subseteq \Theta\\
		\end{array}\right \rbrace$\\
		$~~~\bullet\ r^c_{}(s_{\Theta},s_{\Psi})=\left\{
		\begin{array}{lr}
			1&\;\; \Theta/B\subseteq\Psi\\
			0&\;\; \text{otherwise}
		\end{array}\right.$, 
		where $\Theta/B=\{\neg\neg\varphi\mid\neg\neg B\varphi\in\Theta\}$,\\
		$~~~\bullet\ \pi^c(s_{\Theta},p)=\left\{
		\begin{array}{lr}
			1&\;\; \neg\neg p\in\Theta\\
			0&\;\; \neg\neg p\notin\Theta
		\end{array}\right.
		$,  where $p\in\mathcal{P}$.\\

		Naturally, $\pi^c$ can be extended to all formulas, and we will use the notation $V$ to represent its extension. It is important to note that both $\pi^c$ and, consequently, $V$ assign crisp values. Additionally, it should be noted that $S^c$ is well-defined by Lemma \ref{lem:consistency of axiom 6}.
		Actually the condition $\Gamma_0\subseteq \Theta$ in the definition of $S^c$ is just required for the last part of the proof.
		Now, it is sufficient to demonstrate that for each formula $\varphi$ and each $s_{\Theta}\in S^c$, the following statement holds:
		$$V_{s_{\Theta}}(\neg\neg\varphi)=1\ \Longleftrightarrow\ \neg\neg\varphi\in\Theta $$
		We prove it by induction on the structure of $\varphi$:\\\\
		\textbf{case 1}: $\varphi=p$, where $p\in\mathcal{P}$:
		$$V_{s_{\Theta}}(\neg\neg p)=1\ \Leftrightarrow\ V_{s_{\Theta}}(p)>0\ \Leftrightarrow \ V_{s_{\Theta}}(p)=1\ \Leftrightarrow\ \pi^c(s_{\Theta},p)=1\ \Leftrightarrow\ \neg\neg p\in\Theta$$
		\textbf{case 2}: $\varphi=\varphi_1\wedge\varphi_2$
		\begin{align*}
			&V_{s_{\Theta}}(\neg\neg (\varphi_1\wedge\varphi_2))=1\ \Leftrightarrow\ V_{s_{\Theta}}(\varphi_1\wedge\varphi_2)>0\ \Leftrightarrow \ V_{s_{\Theta}}(\varphi_1)>0, V_{s_{\Theta}}(\varphi_2)>0\ \Leftrightarrow\\
			&V_{s_{\Theta}}(\neg\neg\varphi_1)=1, V_{s_{\Theta}}(\neg\neg\varphi_2)=1\ \Leftrightarrow\ \neg\neg\varphi_1, \neg\neg\varphi_2\in\Theta\quad\text{(induction hypothesis)}\\
			&\Leftrightarrow\ \neg\neg\varphi_1\wedge\neg\neg\varphi_2\in\Theta\quad(\text{maximal consistency of}\ \Theta)\\
			&\Leftrightarrow\ \neg\neg(\varphi_1\wedge\varphi_2)\in\Theta\quad(\text{(GT4) and maximal consistency of}\ \Theta)
		\end{align*}
		\\
		\textbf{case 3}: $\varphi=\varphi_1\rightarrow\varphi_2$
		
		($\Leftarrow$) Let $\neg\neg(\varphi_1\rightarrow\varphi_2)\in\Theta$. Then using (GT5) and maximal consistency of $\Theta$ we have
		\begin{equation}\label{2}
			\neg\neg\varphi_1\rightarrow\neg\neg\varphi_2\in\Theta
		\end{equation}

		Assume that $V_{s_{\Theta}}(\neg\neg(\varphi_1\rightarrow\varphi_2))=0$. Then, $V_{s_{\Theta}}(\varphi_1\rightarrow\varphi_2)=0$ and so it is demonstrated that $V_{s_{\Theta}}(\varphi_1)=1$ and $V_{s_{\Theta}}(\varphi_2)=0$. Hence, $V_{s_{\Theta}}(\neg\neg\varphi_1)=1$ and $V_{s_{\Theta}}(\neg\neg\varphi_2)=0$. By induction hypothesis $\neg\neg\varphi_1\in\Theta$, then by applying Lemma \ref{maximallyConsistent}, the statement (\ref{2}) and the maximal consistency of  ${\Theta}$, conclude that  $\neg\neg\varphi_2\in\Theta$. Consequently, by induction hypothesis  $V_{s_{\Theta}}(\neg\neg\varphi_2)=1$, contradicting the assumption.
		
		($\Rightarrow$) Assume that $V_{s_{\Theta}}(\neg\neg(\varphi_1\rightarrow\varphi_2))=1$, then $V_{s_{\Theta}}(\varphi_1\rightarrow\varphi_2)=1$. Thus it is obtained that either $V_{s_{\Theta}}(\varphi_2)=1$ or $V_{s_{\Theta}}(\varphi_1)=V_{s_{\Theta}}(\varphi_2)=0$. If $V_{s_{\Theta}}(\varphi_2)=1$, it is derived that $V_{s_{\Theta}}(\neg\neg\varphi_2)=1$ and then by induction hypothesis $\neg\neg\varphi_2\in\Theta$. Since $\neg\neg\varphi_2\rightarrow(\neg\neg\varphi_1\rightarrow\neg\neg\varphi_2)$ is an instance of (GT1), then maximal consistency of $\Theta$ results that  $\neg\neg\varphi_1\rightarrow\neg\neg\varphi_2\in\Theta$. Therefore, by (GT5) we obtain that $\neg\neg(\varphi_1\rightarrow\varphi_2)\in\Theta$. 
		In other case, if $V_{s_{\Theta}}(\varphi_1)=V_{s_{\Theta}}(\varphi_2)=0$, we have $V_{s_{\Theta}}(\neg\neg\varphi_1)=V_{s_{\Theta}}(\neg\neg\varphi_2)=0$. Thus by induction hypothesis $\neg\neg\varphi_1, \neg\neg\varphi_2\notin\Theta$ and so by maximality  $\neg\varphi_1, \neg\varphi_2\in\Theta$. Then we have $\neg\varphi_2\rightarrow\neg\varphi_1\in\Theta$, using (GT1) and  $\neg\neg\varphi_1\rightarrow\neg\neg\varphi_2\in\Theta$, by applying (GT2). Finally $\neg\neg(\varphi_1\rightarrow\varphi_2)\in\Theta$ follows from (GT5).
		\\
		\textbf{case 4}: $\varphi=B\psi$
		
		($\Leftarrow$) Assume that $\neg\neg B\psi\in\Theta$, then $\neg\neg\psi\in\Theta/B$. Let $\Psi$ be an arbitrary maximal consistent set of formulas and $s_{\Psi}\in S^c$. Then we have:
		\begin{align*}
			r^c(s_{\Theta},s_{\Psi})=1\ \Rightarrow\ \neg\neg\psi\in\Theta/B\subseteq\Psi\ &\Rightarrow\ V_{s_{\Psi}}(\neg\neg\psi)=1\quad\text{(induction hypothesis)}\\
			& 
			\Rightarrow\ V_{s_{\Psi}}(\psi)=1
		\end{align*}

			Consequently, 
			$V_{s_{\Theta}}(B\psi)=\inf_{s\in S^c}\max\big\{1-r^c(s_{\Theta},s),  V_{s}(\psi)\big\}=1$, so $V_{s_{\Theta}}(\neg\neg B\psi)=1$.
			
			($\Rightarrow$) Assume that $V_{s_{\Theta}}(\neg\neg B\psi)=1$.\\
		\textsc{Claim}: $\Theta/B\cup\{\neg\psi\}$ is inconsistent.\\
		\textit{Proof of the Claim.} Suppose that $\Theta/B\cup\{\neg\psi\}$ is consistent. Then by some modification in the proof of lemma \ref{maximallyConsistent}, there exists a maximal consistent extension $\Psi$ such that $\Theta/B\subseteq\Theta/B\cup\{\neg\psi\}\subseteq\Psi$. Thus, $r^c(s_{\Theta},s_{\Psi})=1$ and $\neg\psi\in\Psi$. 
		By maximality of $\Psi$, we have $\nn\psi\notin\Psi$ and by induction hypothesis $V_{s_{\Psi}}(\nn\psi)=0$, and then $V_{s_{\Psi}}(\psi)=0$. Therefore,
		$$V_{s_{\Theta}}(B\psi)=\dis\inf_{s_{\Psi}\in S^c}\max\big\{1-r^c(s_{\Theta},s_{\Psi}),  V_{s_{\Psi}}(\psi)\big\}=0 $$
		and it is concluded that $V_{s_{\Theta}}(\nn B\psi)=0$, which contradicts the assumption. $\blacksquare_\textit{Claim}$

	Consequently, it follows that there is an inconsistent finite subset $\Delta$ of $\Theta/B\cup\{\neg\psi\}$, and assume that $\Delta=\{\nn\varphi_1,\ldots,\nn\varphi_k,\neg\psi\}$. Note that without loss of generality we can suppose that $\Delta$ contains $\neg\psi$. 
	Hence
				$$\vdash\neg(\nn\varphi_1\wedge\ldots\wedge\nn\varphi_k\wedge\neg\psi). $$
	By (GT4), (GT7) and transitivity rule it can be obtained that  
	$$\vdash\neg(\varphi_1\wedge\ldots\wedge\varphi_k\wedge\neg\psi)$$		
	then 
			$\vdash\varphi_1\rightarrow(\varphi_2\rightarrow(\ldots(\varphi_k\rightarrow\nn\psi)\ldots))$ by using (GT3) and (G2). Consequently by applying \textbf{(R2)} we obtain $\vdash B(\varphi_1\rightarrow\xi)$, where   $\xi$ is $\varphi_2\rightarrow(\ldots(\varphi_k\rightarrow\nn\psi)\ldots))$. Thus  $\vdash\nn B(\varphi_1\rightarrow\xi)$, by (GT6).  Since $\Theta$ is a maximal consistent set, then by Lemma \ref{maximallyConsistent} the following statement holds
			\begin{equation}\label{3}
				\nn B(\varphi_1\rightarrow\xi)\in\Theta
			\end{equation}

			Also from $\nn\varphi_1,\ldots,\nn\varphi_k\in\Theta/B$, it is demonstrated that
			\begin{equation}\label{4}
				\nn B\varphi_1,\ldots,\nn B\varphi_k\in\Theta
			\end{equation}

			Thus, we have
			\begin{align*}
				&\vdash (B\varphi_1\wedge B(\varphi_1\rightarrow\xi))\rightarrow B\xi &&(\text{by \textbf{(K)}})\\
				& \vdash B\varphi_1\rightarrow (B(\varphi_1\rightarrow\xi)\rightarrow B\xi)&&(\text{by (G2)})\\
				& \vdash \nn(B\varphi_1\rightarrow (B(\varphi_1\rightarrow\xi)\rightarrow B\xi))&&(\text{by (GT6)})\\
				& \vdash \nn B\varphi_1\rightarrow \nn (B(\varphi_1\rightarrow\xi)\rightarrow B\xi)&&(\text{by (GT5)})\\
				& \vdash \nn B\varphi_1\rightarrow (\nn B(\varphi_1\rightarrow\xi)\rightarrow \nn B\xi)&&(\text{by (GT5)})
			\end{align*}

			Therefore,

			\begin{equation}\label{5}
				\nn B\varphi_1\rightarrow (\nn B(\varphi_1\rightarrow\xi)\rightarrow \nn B\xi)\in\Theta.
			\end{equation}

			By (\ref{4}), (\ref{5}) and  maximal consistency of $\Theta$ it follows that:
			\begin{equation}\label{6}
				\nn B(\varphi_1\rightarrow\xi)\rightarrow \nn B\xi\in\Theta
			\end{equation}

				Furthermore, utilizing (\ref{3}), (\ref{6}), and the maximal consistency of $\Theta$, we can deduce that $\nn B\xi\in\Theta$. Repeatedly applying the aforementioned process leads to $\nn B\nn\psi\in\Theta$. Since $\neg \neg B \neg \neg \psi \rightarrow \neg \neg B \psi \in \Theta$ based on the definition of $\Theta \in S^c$, it follows that $\nn B\psi\in\Theta$, which completes the proof.
			
		\end{proof}

			\begin{remark}
			
					As the proof of Theorem \ref{weakcompleteness} exclusively relied on models with crisp values and accessibility relations, it follows that $\textbf{K}_\textbf{F}$ is semi-complete with respect to the class of classical models.
				
			\end{remark}
			
		\begin{corollary}
			\label{semi-completeness-corollary}
			$\quad $
			\begin{itemize}[nolistsep]
				\item[(i)] $\textbf{B}_\textbf{F}$ is semi-complete with respect to the class of all  serial and transitive models.
				\item[(ii)] $\textbf{T}_\textbf{F}$ is semi-complete with respect to the class of all  serial, transitive and reflexive models.
			\end{itemize}

		\end{corollary}
		\begin{proof}
			
			Consider an axiomatic system \textbf{A} that includes $\textbf{K}_\textbf{F}$. According to Lemma \ref{A4}, the axiom $\textbf{(D)}$ corresponds to the seriality of models. Therefore, if $\textbf{(D)}$ is a member of \textbf{A}, we can construct a semi-canonical model for \textbf{A} similar to the one used in the proof of Theorem \ref{weakcompleteness}, which possesses the serial property. Additionally, we demonstrate that if the axioms $\textbf{(4)}$ and $\textbf{(T)}$ are elements of \textbf{A}, the constructed semi-canonical model will be transitive and reflexive, respectively. In the subsequent discussions, let $\Gamma_0$ denote the set introduced in Lemma \ref{lem:consistency of axiom 6}.

			(I) 				Suppose $\textbf{(4)}\in \textbf{A}$. We demonstrate that the model $M^c$ introduced in Theorem \ref{weakcompleteness} is transitive. To prove this, we consider three maximal consistent sets of formulas, namely $\Theta_1$, $\Theta_2$, and $\Theta_3$, each containing $\Gamma_0$. We must show that
			
			$$r^c(s_{\Theta_1},s_{\Theta_3})\geq \min\{r^c(s_{\Theta_1},s_{\Theta_2}), r^c(s_{\Theta_2},s_{\Theta_3})\}.$$
			Since the relations in the semi-canonical model have crisp values, we only need to establish $r^c(s_{\Theta_1},s_{\Theta_3})=1$ based on the assumptions $r^c(s_{\Theta_1},s_{\Theta_2})=r^c(s_{\Theta_2},s_{\Theta_3})=1$. Consequently, using these assumptions and the definition of $r^c$, we obtain the following:
			
			\begin{equation}\label{rc1}
				\{\nn\varphi\mid\nn B\varphi\in\Theta_1\}\subseteq\Theta_2,
			\end{equation}
			\begin{equation}\label{rc2}
				\{\nn\varphi\mid\nn B\varphi\in\Theta_2\}\subseteq\Theta_3.
			\end{equation}

		Now suppose that $\nn B\varphi\in\Theta_1$.
				Given the maximality and consistency of $\Theta_1$, we have $B \varphi \in \Theta_1$. Then according to the axiom $\textbf{(4)}$, we obtain $ B B \varphi \in \Theta_1$, and consequently, $\nn B B \varphi \in \Theta_1$. Finally, $\nn B \varphi\in \Theta_2$ is derived by (\ref{rc1}), and $\nn\varphi\in\Theta_3$ is deduced by employing (\ref{rc2}).
				Hence, we can conclude that
				$\{\nn\varphi\mid\nn B\varphi\in\Theta_1\}\subseteq\Theta_3$, which implies $r^c(s_{\Theta_1},s_{\Theta_3})=1$.
			
			(II) 
				Suppose $\textbf{(T)}\in S$. We will demonstrate that $M^c$ is a reflexive model. Let $\Theta$ be a set of maximal consistent formulas containing $\Gamma_0$. By applying (GT2) twice, we obtain $\nn B\varphi\to\nn\varphi\in\Theta$ from $B\varphi\to\varphi\in\Theta$. Consequently, $\{\nn\varphi\mid\nn B\varphi\in\Theta\}\subseteq\Theta$, implying that $r^c(s_{\Theta},s_{\Theta})=1$. This completes the proof.
		\end{proof}
		\begin{corollary}
		\label{ExistModel}
		If $\Theta$ is a maximal and \textbf{A}-consistent set containing $\Gamma_0$ introduced in Lemma \ref{lem:consistency of axiom 6}, then for all $\psi \in \Theta$ we have
		\begin{equation}
			\exists M\; \exists s\; \; 0< V_s(\psi) \leq 1.
		\end{equation}
		\end{corollary}
		\begin{proof}
			
				Consider a maximal and \textbf{A}-consistent set $\Theta$. Let us assume, for the sake of contradiction,  that $\psi \in \Theta$ and for all models $M=(S, r, \pi)$ and all states $s\in S$, $V_s(\psi) = 0$. Consequently, $\vDash \neg \psi$. According to the Semi-Completeness Theorem \ref{weakcompleteness}, we have $\vdash_\textbf{A} \neg \neg \neg \psi$, and by utilizing (GT7), we deduce $\vdash_\textbf{A} \neg \psi$. Since $\Theta$ is maximal and \textbf{A}-consistent, we would then have $\neg \psi \in \Theta$. However, this is not possible since $\psi \in \Theta$ and $\Theta$ is \textbf{A}-consistent.
			
		\end{proof}

	\section{Completeness}											
	
			In this section, we prove the completeness of $\textbf{K}_\textbf{F}$ using a new approach since we can not apply the traditional Lindenbaum's method.
			  Furthermore, we establish that $\textbf{B}_\textbf{F}$ is complete with respect to the class of all serial and transitive models, and $\textbf{T}_\textbf{F}$ is complete with respect to the class of all serial, transitive, and reflexive models.
	
	\begin{definition}

			Let $\varphi$ and $\psi$ be two formulas. We say that $\varphi$ is \textit{dependent on the zero values of} $\psi$ if, for all models $M$ and all states $s$ in $M$, whenever $V_s(\psi)=0$, then $V_s(\varphi)=0$.
			
			We use the notation $\varphi \equiv \psi$ to indicate that two formulas $\varphi$ and $\psi$ are semantically equivalent, meaning that for all models $M$ and all states $s$, $V_s(\varphi) = V_s(\psi)$.
		
	\end{definition}

	\begin{lemma} \label{LEMMA_01}		
		Let $\psi$ and $\chi$ be formulas. If $\psi$ is dependent on the zero values of $\chi$, then  $\psi \equiv \neg \neg \chi \wedge \psi$.
	\end{lemma}

	\begin{proof}
		
			Suppose $M$ is an arbitrary model, and $s$ is one of its states. We consider two cases.
			First, assume $V_s(\chi) = 0$. Since $\psi$ is dependent on the zero values of $\chi$, we have $V_s(\psi) = 0$. Consequently, $V_s(\psi) = V_s(\neg \neg \chi \wedge \psi) = 0$.
			Now, let's consider the case where $V_s(\chi) > 0$. In this case, we have $V_s(\neg \neg \chi) = 1$, and similarly, we have $V_s(\psi) = V_s(\neg \neg \chi \wedge \psi)$.

	\end{proof}

One traditional method for proving the completeness theorem is through Lindenbaum's approach. In this approach, It is assumed that a formula $\varphi$ is not provable, i.e., $\nvdash \varphi$. Then, it is shown that for a maximal and consistent set $\Theta$ containing $\neg\varphi$, there exists a Canonical model $M$ and a state $s$ such that $V_s^{M}(\neg \varphi)>0$, and this leads to a contradiction with the assumption $\vDash \varphi$.

However, in our logic, there are certain restrictions that prevent us from using the traditional Lindenbaum's approach.
We encounter formulas $\varphi$ for which $\nvDash \varphi$ and hence $\nvdash \varphi$, and they possess the following properties:
\begin{equation}\label{eq_030}
\forall M, \, \forall s \quad V_s^{M}(\varphi)>0,\quad \text{ and } \quad\exists M'\, \exists s' \quad V_{s'}^M(\varphi)<1.	
\end{equation}
For such formulas, we have $\neg \varphi\equiv \perp$, and thus they cannot be contained in any maximal consistent set of formulas. Consequently, we propose a new approach to prove the completeness theorem. Specifically, we categorize these types of formulas and employ various lemmas to establish the completeness theorem.

	\begin{definition}
		Let $\varphi$, $\psi$, $\chi$, and $\xi$ be formulas. We define the structural equivalence relation between implications as follows: $\varphi \rightarrow \psi$ is said to be structurally equivalent to $\chi \rightarrow \xi$, denoted as $\varphi \rightarrow \psi \approxeq \chi \rightarrow \xi$, if $\varphi \rightarrow \psi \equiv \chi \rightarrow \xi$ and $\psi \equiv \xi$.
	
	\end{definition}
	
	\begin{definition} \label{def_4_sets}
		Let $\mathcal{T} = \{\varphi\, \mid\, \vdash_{\textbf{A}}\varphi\}, $	
	 $\varphi_t \in \mathcal{T}$ and $\varphi, \psi$ be formulas. Suppose there exists a model $M$ and a state $s$ such that $0 < V_s(\psi) < 1$ and $V_s(\psi) < V_s(\varphi)$. Consider the following grammar:
		$$\begin{array}{llc}
				\varphi_{bad} &::=&  (\neg \neg \psi \wedge \varphi) \rightarrow \psi \,\mid\, \neg \neg \psi \rightarrow \psi \\
				\varphi_e &::= & \chi \rightarrow \chi' \,\mid\, (\neg \neg \chi'\wedge\varphi) \rightarrow \chi'\,\mid \, \neg \neg \chi'\rightarrow \chi'\\
				\varphi_o &::= & \chi' \wedge \chi \\
				\chi & ::= & \varphi_{bad}\,\mid\, \varphi_e \,\mid\, \varphi_o \mid\,\varphi_t\\
				\chi' &::=& \varphi_{bad}\,\mid\, \varphi_e \,\mid\, \varphi_o \\
\end{array}$$
		In the generation of $\varphi_e$, we have two restrictions: $\chi' \not \equiv \chi$ and $\neg \neg \chi' \wedge \varphi \not \equiv \perp$. Additionally, since $\xi' \wedge \xi$ and $\xi \wedge \xi'$ are semantically equivalent, we consider only one of them in $\varphi_o$. We define $\mathcal{E}$ as the set containing all formulas structurally equivalent to a $\varphi_{bad}$-formula or a $\varphi_e$-formula, and $\mathcal{O}$ as the set of all $\varphi_o$-formulas.
						\end{definition}
%

		\begin{remark} $\mathcal{T}$, $\mathcal{E}$ and $\mathcal{O}$ are disjoint sets of formulas.
	\end{remark}
\begin{proposition}\label{rem:EuO_arent_valid}
	For each formula $\varphi \in \mathcal{E}\cup\mathcal{O}$, there is a model $M$ and a state $s$ such that $(M,s)\nvDash \varphi$.
\end{proposition}
\begin{proof}

		We will demonstrate the argument for one possible form of the formula $\varphi$ in Definition \ref{def_4_sets}, but the same reasoning can be applied to other forms as well. Let $\varphi = (\neg\neg \psi \wedge \varphi') \rightarrow \psi$. Consider a model $M$ and a $s \in S^M$ such that $0 < V_s^M(\psi) < 1$ and $V_s^M(\psi) < V_s^M(\varphi')$. Based on these facts, we have $V_s^M(\neg \neg \psi) = 1$, which implies $V_s^M(\neg \neg \psi \wedge \varphi') = V_s^M(\varphi') > V_s^M(\psi)$. Therefore, $V_s^M(\varphi) = V_s^M(\psi)$, indicating that $(M,s) \nvDash \varphi$.

\end{proof}
\begin{proposition}\label{rem:existence_of_reflexvie_model_for_E_and_O}
	For each formula $\varphi\in \mathcal{E}\cup\mathcal{O},$ there is a reflexive model $M$ and a state $s\in S^M$ such that $(M,s)\nvDash \varphi$.
\end{proposition}
\begin{proof}
	Let $\varphi \in \mathcal{E}\cup\mathcal{O}$. According to Proposition \ref{rem:EuO_arent_valid}, there exists a model $M$ and a state $s' \in S^M$ such that $(M,s') \nvDash \varphi$. We define $M' = (S^M, r', \pi^M)$, where for all $s, s' \in S^M$ with $s \neq s'$, we have $r(s,s') = r'(s,s')$, and $r'(s,s) = 1$. 
	We can easily prove by induction that $(M',s') \nvDash \varphi$. In fact, we only need to consider the case $\varphi = B\psi$.  By using the definition of $V_{s'}^{M'}(B\psi)$, we have 
	$$\begin{array}{ll}
		V_{s'}^{M'}(B\psi) & = \inf_{s''\in S}\max\{1-r^{M'}(s',s''), V_{s''}(\psi) \}\\
		& \leq \inf_{s''\in S}\max\{1-r^{M}(s',s''), V_{s''}(\psi) \} \\
		& = V_{s'}^{M}(B\psi).
	\end{array}  $$
\end{proof}

	\begin{lemma} \label{lemma:notinTau_leads_to_have_model}

		Let $\varphi$ be a formula, and $\mathbb{N}$ be the set of natural numbers. If there exists a reflexive model $M$ and a state $s\in S^M$ such that $(M,s)\nvDash \varphi$ (and hence $\nvDash \varphi$), then there exists a reflexive model $M'=(\mathbb{N}, r, \pi)$ and an element $n\in\mathbb{N}$ such that for infinitely many $m\in \mathbb{N}$, we have: $$\max\{1-r^{M'}(n,m), V_m^{M'}(\varphi)\}\leq \frac{1}{m}.$$
		\end{lemma}
	\begin{proof}
	The proof can be found in the appendix.
	\end{proof}

	\begin{theorem}\label{NotEqualToPerp}
		Let $\varphi$ be a formula. Then we have
		$$\neg \varphi \equiv \perp \quad \text{ if and only if } \quad \varphi \in \mathcal{T} \cup \mathcal{E} \cup \mathcal{O}.$$
	\end{theorem}
	\begin{proof}
		{\bf$(\Rightarrow)$} 
			First, suppose that $\neg \varphi \equiv \perp$. We will use induction on the complexity of $\varphi$ to demonstrate that $\varphi \in \mathcal{T} \cup \mathcal{E} \cup \mathcal{O}$. The base step is trivially correct since it relies on the denial of the antecedent. Now, let's consider the induction step where $\alpha$ and $\beta$ are formulas:

		\textbf{case 1.} $\varphi = \neg \alpha$.
		
		$\vDash \neg \alpha$, since $\neg \neg \alpha \equiv \perp$.
 By Semi-Completeness theorem, we have $\vdash_\textbf{A} \neg \neg \neg \alpha$, and using $(GT7)$ we obtain $\vdash_\textbf{A} \neg \alpha$, and thereby $\varphi \in \mathcal{T}$.
 
		
		\textbf{case 2.} $\varphi = \alpha \wedge \beta$.
		
		Since $\neg (\alpha \wedge \beta) \equiv \perp$, we can conclude $\forall M\, \forall s\; 0<V_s(\alpha \wedge \beta)$. As a result, both $\alpha$ and $\beta$ satisfy the induction hypothesis and belong to $\mathcal{T} \cup \mathcal{E} \cup \mathcal{O}$. Now, let's demonstrate that $\alpha \wedge \beta \in \mathcal{T} \cup \mathcal{E} \cup \mathcal{O}$ by considering different cases.
		For instance, if both $\alpha$ and $\beta$ belong to $\mathcal{T}$, then by applying $(GT9)$ and utilizing the rule of Modus Ponens (MP), we can deduce that $\vdash_\textbf{A} \alpha \wedge\beta$. Consequently, $\alpha \wedge \beta \in \mathcal{T}$. In other cases, it can be established that $\alpha \wedge \beta \in \mathcal{O}$ based on the definition \ref{def_4_sets}.

		\textbf{case 3.} $\varphi = \alpha \rightarrow \beta$.

			If $\neg (\alpha \rightarrow \beta) \equiv \perp$, then it follows that $\forall M \; \forall s \; 0< V_s(\alpha \rightarrow \beta)$. This indicates that $\alpha$ is dependent on  zero values of $\beta$. By Lemma (\ref{LEMMA_01}), we have $\varphi \equiv (\neg \neg \beta \wedge \alpha)\rightarrow \beta$. Therefore, $\alpha \rightarrow \beta \approxeq (\neg \neg \beta \wedge \alpha)\rightarrow \beta$.
			If $\alpha \equiv \perp$, then $\alpha \rightarrow \beta \in \mathcal{T}$. On the other hand, if $\exists M \, \exists s \, V_s(\beta)<1$, then we can conclude that $\varphi \in \mathcal{E}$. In the case where $\forall M \, \forall s \, V_s(\beta)=1$, it is implied that $\neg \beta \equiv \bot$. By the induction hypothesis we have $\beta \in \mathcal{T} \cup \mathcal{E} \cup \mathcal{O}$. However, Proposition \ref{rem:EuO_arent_valid} implies that $\beta \notin \mathcal{E}\cup\mathcal{O}$. Hence, $\beta\in \mathcal{T}$, and we can derive $\vdash_{\textbf{A}}\beta$.
			Using  $\beta \rightarrow (\alpha \rightarrow \beta)$ which is a theorem in axiomatic systems $\textbf{K}_\textbf{F}$, $\textbf{B}_\textbf{F}$, and $\textbf{T}_\textbf{F}$, we can infer $\vdash_\textbf{A} \alpha \rightarrow \beta$, and so $\varphi \in \mathcal{T}$.

		\textbf{case 4.} $\varphi = B \alpha$. 
		
		Let $\neg (B \alpha) \equiv \perp$. Then
		\begin{equation} \label{eq_Bgeqzero}
			\forall M\; \forall s\;\;\;\; 0<V_s^M(B \alpha)\leq 1.
		\end{equation}

		 	We claim that $\forall M \; \forall s \; 0<V_s^M(\alpha)$. To prove this claim, we assume that if for a model $M'$ there exists a state $s'$ such that $V_{s'}^{M'}(\alpha)=0$, then there exists a model $M^*$ and a state $s^*$ such that $V_{s^*}^{M^*}(B \alpha)=0$ which this leads to a contradiction by equation (\ref{eq_Bgeqzero}). So let $M'$ be a model that contains a state $s'$, where
		 
		\begin{equation}\label{eq:00}
			V_{s'}^{M'}(\alpha)=0.
		\end{equation}.
		
		First suppose that $M'$ has more than one state. Then there exists $s'\neq s'' \in S^{M'}$ such that $V_{s''}^{M'}(B\alpha)>0$ by (\ref{eq_Bgeqzero}), and so  $r^{M'}(s'',s')\neq 1$, by the definition of $V_{s''}^{M'}(B\alpha)$.
		We define $M^*$ as follows:
		\vskip 0.2cm
		$S^{M^*} = S^{M'};$
		
		$\forall s\in S^{M^*} \forall p \in \mathcal{P} \quad \pi^{M^*}(s, p) = \pi^{M'}(s, p);$
		
		$r^{M^*}(s',s'') = r^{M'}(s', s''),\qquad r^{M^*}(s'',s') =1,$		
		
		$\forall t_0 , t_1 \in S^{M^*} \; \text{s.t. }\; t_0\neq s' \text{ or } t_1\neq s'' \qquad r^{M^*}(t_0,t_1)=r^{M'}(t_0,t_1).
		$
		\vskip 0.2cm
		The only difference between $M^*$ and $M'$ is $r^{M^*}(s'', s')$.
		We first prove that $V_{s'}^{M^*}(\alpha) = 0$.  For this, we use induction on the complexity of $\alpha$ to show that for infinitely many $t_m\in S^{M'}$  we have $V_{t_m}^{M^*}(\alpha) \leq V_{t_m}^{M'}(\alpha)$, specially $V_{s'}^{M^*}(\alpha) = V_{s'}^{M'}(\alpha)$ (Note that if $S^{M'}$ is finite, then we prove the inequality  for all $t_m\in S^{M'}$). We only check the case $\alpha = B \psi$, the other cases can be easily obtained.
		First, note that since the only difference between $M^*$ and $M'$ is $r^{M^*}(s'', s')$, we have $V_{s''}^{M^*}(B\psi) \leq V_{s''}^{M'}(B\psi)$ by definition, and it can be checked  for all other states $t_m$, using induction hypothesis.
		
		By (\ref{eq:00}) we have $V_{s'}^{M'}(B \psi) = 0$. So either there is a state $t\in S^{M'}$ in which $V_t^{M'} (\psi) = 0$ and $r^{M'}(s',t) = 1$, or there are infinitely many states $t_m\in S^{M'}$ such that $\max\{1-r(t,t_m), V_{t_m}^{M'}(\psi)\}\leq\frac{1}{m}$ (Note that the first case occurs when $S^{M'}$ is finite).
		In the former case, by induction hypothesis $V_{t_m}^{M^*}(\psi) \leq V_{t_m}^{M'}(\psi)$, then we have $V_t^{M^*}(\psi)=0$, and by definition of $M^*$ we have $r^{M^*}(s',t)=1$. 
		Moreover,

		$$V_{s'}^{M^*}(B \psi) \leq \max\{1-r^{M^*}(s',t), V_t^{M^*}(\psi)\} = 0.$$	
		Therefore, $V_{s''}^{M^*}(B \alpha) \leq \max\{1-r^{M^*}(s'',s'), V_{s'}^{M^*}(\alpha)\} = 0$ and so $V_{s''}^{M^*}(B \alpha)=0$ which is a contradiction.
		
		In latter case, by induction hypothesis we have $\max\{1-r(t,t_m), V_{t_m}^{M^*}(\psi)\}\leq \frac{1}{m}$, for infinitely many $t_m \in S^{M^*}$. So $V_{s'}^{M^*}(B\psi)=0$ and by a similar argument to the previous case we can achieve a contradiction.
		
		
		Now, suppose that $M'$ has a single state $s'$ . We extend it to a model $M''$ with two states as follows:
		
		$S^{M''} = \{s', t''\}$
		
		$r^{M''}(s',s') = r^{M'}(s',s'), \quad r^{M''}(s',t'') = 0,\quad r^{M''}(t'',s')=0.5$
		
		$(\forall p\in \mathcal{P}) \;\;\; \pi^{M''}(s',p) = \pi^{M'}(s',p)$.
		
		Note that the value of $r^{M''}(t'',t'')$ and $\pi(t'', p)$ for all $p\in\mathcal{P}$ are not important in our argument.  
		We show that $V_{s'}^{M'} (\alpha) = V_{s'}^{M''} (\alpha) $, by induction on the complexity of $\alpha$. 
		\begin{itemize}

			\item $\alpha = p$: It is obvious. 
			\item$\alpha = \neg \psi$:
			If $V_{s'}^{M'}(\neg \psi)=0$, then $V_{s'}^{M'}(\psi) >0$, so we have $V_{s'}^{M''}(\psi) >0$ by induction hypothesis. Thus $V_{s'}^{M''}(\neg \psi )= 0$. The case $V_{s'}^{M'}(\neg \psi) \neq 0$ has a similar argument.
			\item $\alpha = \psi \wedge \xi$:
				If $V_{s'}^{M' }(\psi \wedge \xi) = 0$, then at least one of $V_{s'}^{M'} (\psi)$ or $ V_{s'}^{M'} (\xi)$ is equal to 0. By induction, this implies that at least one of $V_{s'}^{M''} (\psi)$ or $ V_{s'}^{M''} (\xi)$ is equal to zero, confirming the validity of the statement. A similar argument can be applied when $V_{s'}^{M'}(\psi\wedge\xi) \neq 0$.
			
			\item $\alpha = \psi \rightarrow \xi$: Similar to the previous cases  using induction hypothesis on $\xi$ and $\psi$, the statement holds.
			\item$\alpha = B \psi$: 
					If $V_{s'}^{M'}(B \psi) = 0$, we have $r^{M'}(s', s') = 1$, which contradicts (\ref{eq_Bgeqzero}) since $V_{s'}^{M'}(B B \psi) = 0$. Therefore, this case cannot occur. On the other hand, if $V_{s'}^{M'}(B \psi)\neq 0$, then according to the definition, $V_{s'}^{M'}(B \psi) = \max \{1-r^{M'}(s',s'), V_{s'}^{M'}(\psi)\}$. Note that by the induction hypothesis, we have $V_{s'}^{M'}(\psi) = V_{s'}^{M''}(\psi)$. Thus, we can conclude that $\max \{1-r^{M'}(s',s'), V_{s'}^{M'}(\psi)\} = \max \{1-r^{M''}(s',s'), V_{s'}^{M''}(\psi)\}$, which implies $V_{s'}^{M'}(B \psi) = V_{s'}^{M''}(B \psi)$.
				
		\end{itemize}

			Now, given that $V_{s'}^{M''}(\alpha) = 0$ and $M''$ has more than one state, we can construct the model $M^*$ from $M''$ as described earlier, which leads to a contradiction with (\ref{eq_Bgeqzero}).  Hence, for all models $M$ and all states $s$, we have $0 < V_s^M(\alpha)$.
			\\
			By the induction hypothesis on $\alpha$, we conclude that $\alpha \in \mathcal{T} \cup \mathcal{E} \cup \mathcal{O}$. If $\alpha \in \mathcal{T}$, then $B \alpha \in \mathcal{T}$. We  demonstrate that the other cases cannot occur, thus completing the proof.

		\textsc{Claim}: If $\alpha \in \mathcal{E}\cup \mathcal{O}$, then there is a model $M$ and a state $s$ such that $V_s^M(B\alpha)=0$.
		
		\textit{Proof of claim:}
			By the definitions of $\mathcal{E}$ and $\mathcal{O}$, and referring to Proposition \ref{rem:existence_of_reflexvie_model_for_E_and_O}, we can find a reflexive model $M$ and a state $s$ such that $(M, s) \nvDash \alpha$. Let us consider the model $M = (\mathbb{N}, r, \pi)$ introduced in Lemma \ref{lemma:notinTau_leads_to_have_model}. There exists an $n \in \mathbb{N}$ such that $\max\{1-r(n, m), V_m^M(\alpha)\} \leq \frac{1}{m}$ for infinitely many $m \in \mathbb{N}$. Consequently, we have $V_n^M(B\alpha) = \inf_{m \in \mathbb{N}} \max\{1-r(n, m), V_m^M(\alpha)\} = 0$.
		
	 \hfill$\blacksquare_\textit{Claim}$
	
		Thus by above claim, there is a model $M$ in which $V^{M}_s(B\alpha)=0$, which is a contradiction with the assumption that $\neg B\alpha\equiv \perp$. 		
		
		\vspace{2mm}

		$(\Leftarrow)$ For another side, suppose that $\varphi \in \mathcal{T} \cup \mathcal{E} \cup \mathcal{O}$. We consider the following cases:
		
		case 1. $\varphi \in \mathcal{T}$. By Theorem \ref{soundness}, we know that $\vDash \varphi$, thus, $\forall M\; \forall s\; V_s(\neg \varphi)=0$.
		
		case 2. 
		$\varphi = \mu \rightarrow \xi \in \mathcal{E}$. 
			By Definition \ref{def_4_sets}, we have three possible cases for $\varphi$: $\varphi \approxeq (\neg \neg \psi \wedge \chi)\rightarrow \psi$, $\varphi \equiv \nn \psi \rightarrow \psi$, or $\varphi \approxeq \varphi_e$, where $\psi$ and $\varphi_e$ satisfy the conditions of the definition.
			If $\varphi \approxeq (\neg \neg \psi \wedge \chi)\rightarrow \psi$ or $\varphi \equiv \nn \psi \rightarrow \psi$, it follows that $\neg \varphi \equiv \perp$. Therefore, for all models $M$ and states $s$, we have $V_s(\varphi) \neq 0$, since if $V_s(\psi) = 0$, then $V_s(\nn \psi) = V_s(\nn \psi \wedge \chi) = 0$.
			If $\varphi \approxeq \varphi_e$, the  argument works using recursive nature of definitions.  
	
		case 3. $\varphi \in \mathcal{O}$ and $\varphi = \psi \wedge \chi$, where $\psi , \chi \in \mathcal{T}\cup \mathcal{E}$. By a discussion similar to the cases 1 and 2, it is easy to see that for all models $M$ and all states $s$ we have $V_s(\varphi)>0$. 
	\end{proof}

	\begin{theorem}\label{completeness}\textbf{(Completeness)}
		$\textbf{K}_\textbf{F}$ is complete with respect to the class of all  EGL-models, i.e. for each  formula $\varphi$, if $\vDash \varphi$ then $\vdash \varphi$.
	\end{theorem}
	\begin{proof}
			
			Let $\varphi$ be a formula such that $\vDash \varphi$. By Semi-Completeness (Theorem \ref{weakcompleteness}), we have $\vdash \neg \neg \varphi$.  First, we show that $\vdash \neg \neg \varphi \rightarrow \varphi.$
			%
			Note that from $\vDash \varphi$, we can derive $\vDash \neg\neg \varphi \rightarrow \varphi$. Utilizing Lemma \ref{rem:EuO_arent_valid}, we can deduce that $\neg\neg \varphi \rightarrow \varphi \notin \mathcal{E}\cup \mathcal{O}$; otherwise, there would exist a model $M=(S, r, \pi)$ where, for some $s\in S$, we would have $V_s(\neg\neg \varphi \rightarrow \varphi) < 1$, leading to $V_s(\varphi)<1$, which contradicts $\vDash \varphi$. Additionally, straightforward verification shows that $\neg(\neg\neg \varphi \rightarrow \varphi)\equiv \perp$. Therefore, based on Theorem \ref{NotEqualToPerp}, we can conclude that $\neg \neg \varphi \rightarrow \varphi \in \mathcal{T}$, and so $\neg \neg \varphi \rightarrow \varphi$ is provable.	
			Now, by applying rule \textbf{(R1)} we can derive $\vdash \varphi$.

	\end{proof}

	Although we have established the completeness of $\textbf{K}_\textbf{F}$, the question of whether it is strongly complete remains unanswered.
%
	\begin{corollary}
		$\quad$
		\begin{itemize}[nolistsep]
			\item[(i)] $\textbf{B}_\textbf{F}$ is complete with respect to the class of all  serial and transitive models.
			\item[(ii)] $\textbf{T}_\textbf{F}$ is complete with respect to the class of all  serial, transitive and reflexive models.
		\end{itemize}

	\end{corollary}
	\begin{proof} 
	Since both $\textbf{B}_\textbf{F}$ and $\textbf{T}_\textbf{F}$ are semi-complete with respect to the classes of formulas mentioned, we can establish their completeness using Theorem \ref{NotEqualToPerp} and a similar proof as presented in Theorem \ref{completeness}.
	\end{proof}

		\begin{theorem}(deduction theorem) \label{deduction}
		Let $\Gamma\cup\{\varphi,\psi\}$ be a set of formulas.  $\Gamma\cup \{\varphi\}\vdash \psi $ if and only if $\Gamma \vdash \varphi \rightarrow \psi$.
	\end{theorem}
	\begin{proof}
		First, note that the ``only if" part is trivial. We prove the other direction  by induction on the length of the proof of $\Gamma \cup \{\varphi\}\vdash \psi$. For base step,  if $\psi$ is either an element of $\Gamma$ or is equal to $\varphi$ or an axiom of the corresponding axiomatic system, it can be easily check that  $\Gamma \vdash \varphi \rightarrow \psi$. For induction step, $\psi$ can be deduced by applying either rule R1 or rule R2. In former case $\Gamma \vdash \varphi\rightarrow \psi$ is obtained
		since G\"odel logic preserve deduction theorem. (Theorem 4.2.10 in \cite{Hajek1998}). 
		Now, suppose that the last applied rule is the  necessiation rule. So there is a formula $\chi$ such that $\psi = B \chi$ obtained from rule \textbf{(R2)}.
		But by Definition \ref{def:derivation} we have $\vdash B\chi$ which is followed from $\vdash \chi$.
		Hence,  $\Gamma \vdash B\chi$. Also, $B \chi \rightarrow (\varphi \rightarrow B \chi)$ is provable in G\"odel logic, and we have $\Gamma \vdash B \chi \rightarrow (\varphi \rightarrow B \chi)$. Therefore, by using \textbf{(R1)} we obtain $\Gamma \vdash \varphi \rightarrow B \chi$.
	\end{proof}

	\section{Crisp Models and Finite Model Property}\label{section:finite_model_property}
	
In this section, we observe that the concept of validity in our logics differs from that in crisp models. Furthermore, we demonstrate that our logic does not possess the finite model property. Consequently, we draw a comparison between our logic and the standard G\"odel modal logics $\bm{\mathcal{G}}_\Box$ and $\bm{\mathcal{G}}_\Diamond$ proposed in \cite{CR10}. It is worth noting that $\bm{\mathcal{G}}_\Diamond$ lacks the finite model property, while the validity in $\bm{\mathcal{G}}_\Box$ can be reduced to models with crisp accessibility relations.

	\begin{definition}
A model $M = (S, r, \pi)$ is referred to as \emph{crisp} if its accessibility relation $r$ takes values from the set $\{0,1\}$. i.e. $r: S \times S \rightarrow \{0, 1\}$. We use the notation $\vDash_{\text{crisp}} \varphi$ to indicate that a formula  $\varphi$ is valid within the class of all crisp models.
	\end{definition}

	\begin{example}
		We demonstrate that $\vDash \varphi$ is not equivalent to $\vDash_{crisp} \varphi$. To illustrate this, we consider the formula $\neg\neg B \neg p \rightarrow B \neg p$ and show that it is valid in all crisp models but not in all EGL-models.
		It is not difficult to verify that in all crisp models $M=(S,r,\pi)$, for all $s\in S$, we have $V_s^M(\neg\neg B \neg p) = V_s^M(B\neg p)$, and consequently, $V_s^M(\neg\neg B \neg p \rightarrow B \neg p)=1$. This implies $\vDash_{crisp} \neg\neg B \neg p \rightarrow B \neg p$.
		However, let us consider the  single-state model $M'=({s_1}, r', \pi')$ where $r(s_1, s_1)=0.2$ and $\pi(s_1,p)=0.2$. By a simple check, we find that $V_{s_1}^{M'}(\neg\neg B \neg p \rightarrow B \neg p) = 0.8$, indicating that $\nvDash \neg\neg B \neg p \rightarrow B \neg p$. 
			\end{example}

Now we show that our logic does not have the finite model property.


The following Proposition shows that it is a valid formula in the class of finite models.

\begin{proposition}\label{axim6isvalid}
	The formula $\neg \neg B\neg \neg \varphi\rightarrow \neg \neg B \varphi$ is true in any model $M=(S, r, \pi)$ in which $S$ is finite.
\end{proposition}
\begin{proof}
	Indeed, it can be easily checked that the schema $\neg \neg B\neg \neg \varphi\rightarrow \neg \neg B \varphi$,  is not true in the infinite model $M=(\mathbb{N}, r, \pi)$, where $\mathbb{N}$ represents the set of natural numbers, $r(i,j)=1$ for all $i,j\in\mathbb{N}$, and $V_i(\varphi)=\frac{1}{i+1}$ for all $i\in\mathbb{N}$ (as introduced in Section 5 of \cite{CR10}).
	Now, assume $S$ is finite and $s\in S$ be a state. We can easily see that since $S$ is finite then $V_s(B\neg\neg\varphi)=0$ if and only if $V_s(B\varphi)=0$. Moreover, $V_s(B\neg\neg\varphi)>0$ if and only if (for all $s'\in S$, $V_{s'}(\neg\neg\varphi)=1$ or $r(s,s')< 1$) if and only if  (for all $s'\in S$, $V_{s'}(\varphi)>0$ or $r(s,s')< 1$) if and only if $V_s(B\varphi)>0$. Thus, in both cases, we can  conclude that $V_s(\neg\neg B\neg\neg\varphi)=V_s(\neg\neg B\varphi).$
\end{proof}

\begin{remark}
	  Considering a model with a single state $s$ in which $ \pi(s,p)=0 $ for some $ p\in\mathcal{P} $ and $ r_a(s,s)=0.2 $ for an agent $ a\in\mathcal{A} $, we can see that $Z_\Box: \neg \neg \Box \varphi \rightarrow \Box \neg \neg \varphi$ (the axiom which is used in \cite{CR10}) is not valid even in the class of finite models.
\end{remark}
	
	\section{Conclusion}

We  introduced an epistemic extension $\textbf{K}_\textbf{F}$ of G\"odel fuzzy logic that serves as a fuzzy version of classical epistemic logic $\textbf{K}$. Also, we proposed two axiomatic systems  $\textbf{B}_\textbf{F}$ and $\textbf{T}_\textbf{F}$ by considering consistent belief and adding positive introspection and Truth axioms to the axioms of $\textbf{K}_\textbf{F}$, respectively.
We proved that all of these systems are sound and complete with respect to the appropriate Kripke-based fuzzy models.  	 Furthermore, we have revealed that validity in $\textbf{K}_\textbf{F}$ cannot be reduced to the class of all models having crisp accessibility relations, and also  $\textbf{K}_\textbf{F}$ does not enjoy the finite model property. Then we  concluded that these properties distinguish $\textbf{K}_\textbf{F}$ as a new epistemic extension of   G\"odel fuzzy logic that differs from the standard G\"odel Modal Logics $\bm{\mathcal{G}}_\Box$ and $\bm{\mathcal{G}}_\Diamond$ proposed by Caicedo and O. Rodriguez in \cite{CR10}.

\section*{Acknowledgement}
We would like to express my sincere gratitude to Professor Hans van Ditmarsch for his invaluable and insightful comments on the manuscript. His thoughtful feedback and constructive suggestions have significantly enriched the quality of this work. We are truly appreciative of the time and effort he dedicated to providing detailed and constructive feedback, which has undoubtedly enhanced the clarity and depth of the content.

	\section*{Appendix}
	\textbf{Proof of Lemma \ref{lemma:notinTau_leads_to_have_model}}.
	\begin{proof}
		We use induction on the complexity of $\varphi$. Notice that for the following case 1 and case 2, we only need the weaker assumption, i. e. $\nvDash \varphi$.\\
		\textbf{case 1.} $\varphi = p \;(p\in\mathcal{P})$.  Let $M=(\mathbb{N}, r, \pi)$, in which $r(n,m)=1$ and $\pi(m,p)\leq \frac{1}{p}$ for all $n,m\in\mathbb{N}$.\\
		\textbf{case 2.} 
		$\varphi = \neg \psi$. Since $\nvDash \varphi$, we have $\nvDash \neg \psi$, which implies $\psi\not\equiv \perp$. Therefore, there exists a model $M$ and a state $s\in S^{M}$ such that $V_{s}^M(\psi)>0$.
		Now, for each $i\in \mathbb{N}$, let $M_i$ be a copy of $M$. We construct the model $M'$ as follows:
		
		$$\begin{array}{l}
			S^{M'} = \bigcup_{j\in \mathbb{N}}S^{M_i}\cup \{t\}, \\
			r^{M'}(t, u) = 1-\frac{1}{i}, \quad r(u, t)=0 \quad \forall i\in \mathbb{N}, u\in S^{M_i}.\\
			r^{M'}(u,v) = r^{M_i}(u,v),\quad \forall u,v\in S^{M_i},\\
			r^{M'}(u, v) = 0 \quad \forall u \in S^{M_i} \forall v \in S^{M_j},\\
			r^{M'}(t,t) = 1.
		\end{array}$$
		
		It is not difficult to verify that for each $i\in \mathbb{N}$, we have $\max\{1-r^{M'}(t,s_{i}), V_{s_i}^{M'}(\neg \psi)\}\leq \frac{1}{i}$, where $s_i$ is the copy of $s$ in $M_i$ (See Figure \ref{fig:case3-lemma}).  Indeed, we only check the case $\psi = B\chi$, and we can rearrange the names of the states of $M'$ to obtain the desired model.
		In the case of $\psi = B\chi$, we can observe that
		
		$$\begin{array}{ll}
			V_{s_i}^{M'} (\psi) & = \inf_{u\in S^{M'}} \max \{1-r^{M'}(s_i, u), V_u^{M'}(\chi)\},\\
			& = \inf\left\lbrace  \inf_{u\in S^{M_j}}\max\{1-r^{M'}(s_i, u), \right. V_u^{M'}(\chi)\} ,\\ &\quad \max\{1-r^{M'}(s_i, t), V_t^{M'}(\chi)\}, \\
			& \quad  \inf_{u\in S^{M'}\backslash (\bigcup_j S^{M_j}\cup t)}\left.\max \{1-r^{M'}(s_i,u), V_u^{M'}(\chi)\}  \right\rbrace\\
			& = \inf_{u\in S^{M_i}}\max\{1-r^{M_i}(s_i, u), V_u^{M_i}(\chi)\} \\
			& = \inf_{u\in S^{M}}\max\{1-r^{M}(s, u), V_u^{M}(\chi)\}\\
			& = V_s^M(B\chi) \\
			& = V_s^M(\psi).
		\end{array}$$
		
		Now, considering the fact that $V_s^M(\psi)>0$, it follows that $V_{s_i}^{M'}(\neg \psi) = 0$ for all $i \in \mathbb{N}$. Since $r^{M'}(t,s_i)=1-\frac{1}{i}$, we can observe that $\max\{1-r^{M'}(t,s_i), V_{s_i}^{M'}(\neg \psi)\}\leq \frac{1}{i}$.
		\\
		\tikzset{every picture/.style={line width=0.75pt}} 
		\begin{figure}
			\begin{center}
				\resizebox*{10cm}{!}{
					\begin{tikzpicture}[x=0.75pt,y=0.75pt,yscale=-1,xscale=1]
						
						\draw  [fill={rgb, 255:red, 255; green, 247; blue, 195 }  ,fill opacity=1 ] (127.7,132.15) .. controls (127.7,125.99) and (132.69,121) .. (138.85,121) .. controls (145.01,121) and (150,125.99) .. (150,132.15) .. controls (150,138.31) and (145.01,143.3) .. (138.85,143.3) .. controls (132.69,143.3) and (127.7,138.31) .. (127.7,132.15) -- cycle ;
						\draw   (66.2,106.3) -- (168.2,106.3) -- (168.2,164.3) -- (66.2,164.3) -- cycle ;
						\draw  [fill={rgb, 255:red, 255; green, 247; blue, 195 }  ,fill opacity=1 ] (284.7,243.15) .. controls (284.7,236.99) and (289.69,232) .. (295.85,232) .. controls (302.01,232) and (307,236.99) .. (307,243.15) .. controls (307,249.31) and (302.01,254.3) .. (295.85,254.3) .. controls (289.69,254.3) and (284.7,249.31) .. (284.7,243.15) -- cycle ;
						\draw   (223.2,217.3) -- (325.2,217.3) -- (325.2,275.3) -- (223.2,275.3) -- cycle ;
						\draw  [fill={rgb, 255:red, 255; green, 247; blue, 195 }  ,fill opacity=1 ] (283.7,70.15) .. controls (283.7,63.99) and (288.69,59) .. (294.85,59) .. controls (301.01,59) and (306,63.99) .. (306,70.15) .. controls (306,76.31) and (301.01,81.3) .. (294.85,81.3) .. controls (288.69,81.3) and (283.7,76.31) .. (283.7,70.15) -- cycle ;
						\draw   (222.2,44.3) -- (324.2,44.3) -- (324.2,102.3) -- (222.2,102.3) -- cycle ;
						\draw  [fill={rgb, 255:red, 255; green, 247; blue, 195 }  ,fill opacity=1 ] (283.7,142.15) .. controls (283.7,135.99) and (288.69,131) .. (294.85,131) .. controls (301.01,131) and (306,135.99) .. (306,142.15) .. controls (306,148.31) and (301.01,153.3) .. (294.85,153.3) .. controls (288.69,153.3) and (283.7,148.31) .. (283.7,142.15) -- cycle ;
						\draw   (222.2,116.3) -- (324.2,116.3) -- (324.2,174.3) -- (222.2,174.3) -- cycle ;
						\draw  [fill={rgb, 255:red, 255; green, 247; blue, 195 }  ,fill opacity=1 ] (404.7,172.15) .. controls (404.7,165.99) and (409.69,161) .. (415.85,161) .. controls (422.01,161) and (427,165.99) .. (427,172.15) .. controls (427,178.31) and (422.01,183.3) .. (415.85,183.3) .. controls (409.69,183.3) and (404.7,178.31) .. (404.7,172.15) -- cycle ;
						\draw    (306,70.15) .. controls (345.6,83.1) and (390.63,112.69) .. (414.76,158.88) ;
						\draw [shift={(415.85,161)}, rotate = 243.29] [fill={rgb, 255:red, 0; green, 0; blue, 0 }  ][line width=0.08]  [draw opacity=0] (10.72,-5.15) -- (0,0) -- (10.72,5.15) -- (7.12,0) -- cycle    ;
						\draw    (415.85,161) .. controls (378.76,148.49) and (331.44,125.31) .. (307.1,72.58) ;
						\draw [shift={(306,70.15)}, rotate = 66.21] [fill={rgb, 255:red, 0; green, 0; blue, 0 }  ][line width=0.08]  [draw opacity=0] (10.72,-5.15) -- (0,0) -- (10.72,5.15) -- (7.12,0) -- cycle    ;
						\draw    (306,142.15) .. controls (347.15,135.47) and (377.64,147.66) .. (402.78,170.38) ;
						\draw [shift={(404.7,172.15)}, rotate = 223.09] [fill={rgb, 255:red, 0; green, 0; blue, 0 }  ][line width=0.08]  [draw opacity=0] (10.72,-5.15) -- (0,0) -- (10.72,5.15) -- (7.12,0) -- cycle    ;
						\draw    (404.7,172.15) .. controls (379.13,182.91) and (324.22,163.73) .. (307.65,144.27) ;
						\draw [shift={(306,142.15)}, rotate = 54.83] [fill={rgb, 255:red, 0; green, 0; blue, 0 }  ][line width=0.08]  [draw opacity=0] (10.72,-5.15) -- (0,0) -- (10.72,5.15) -- (7.12,0) -- cycle    ;
						\draw    (307,243.15) .. controls (341.5,196.26) and (367.16,189.56) .. (413.02,183.66) ;
						\draw [shift={(415.85,183.3)}, rotate = 172.82] [fill={rgb, 255:red, 0; green, 0; blue, 0 }  ][line width=0.08]  [draw opacity=0] (10.72,-5.15) -- (0,0) -- (10.72,5.15) -- (7.12,0) -- cycle    ;
						\draw    (415.85,183.3) .. controls (400.36,203.1) and (375.7,234.66) .. (309.03,242.91) ;
						\draw [shift={(307,243.15)}, rotate = 353.43] [fill={rgb, 255:red, 0; green, 0; blue, 0 }  ][line width=0.08]  [draw opacity=0] (10.72,-5.15) -- (0,0) -- (10.72,5.15) -- (7.12,0) -- cycle    ;
						\draw    (425,166.6) .. controls (442.55,139.3) and (460.1,182.35) .. (429.45,172.98) ;
						\draw [shift={(427,172.15)}, rotate = 20.11] [fill={rgb, 255:red, 0; green, 0; blue, 0 }  ][line width=0.08]  [draw opacity=0] (10.72,-5.15) -- (0,0) -- (10.72,5.15) -- (7.12,0) -- cycle    ;
						\draw    (294.85,81.3) .. controls (288.24,96.06) and (282.42,104.03) .. (293.57,128.29) ;
						\draw [shift={(294.85,131)}, rotate = 244.05] [fill={rgb, 255:red, 0; green, 0; blue, 0 }  ][line width=0.08]  [draw opacity=0] (10.72,-5.15) -- (0,0) -- (10.72,5.15) -- (7.12,0) -- cycle    ;
						\draw    (294.85,131) .. controls (310.71,108.43) and (300.78,95.61) .. (295.92,84.08) ;
						\draw [shift={(294.85,81.3)}, rotate = 71.36] [fill={rgb, 255:red, 0; green, 0; blue, 0 }  ][line width=0.08]  [draw opacity=0] (10.72,-5.15) -- (0,0) -- (10.72,5.15) -- (7.12,0) -- cycle    ;
						\draw    (294.85,153.3) .. controls (280.45,173.96) and (282.84,202.81) .. (294.72,229.53) ;
						\draw [shift={(295.85,232)}, rotate = 244.87] [fill={rgb, 255:red, 0; green, 0; blue, 0 }  ][line width=0.08]  [draw opacity=0] (10.72,-5.15) -- (0,0) -- (10.72,5.15) -- (7.12,0) -- cycle    ;
						\draw    (295.85,232) .. controls (312.91,192.14) and (301.56,171.2) .. (295.86,156.12) ;
						\draw [shift={(294.85,153.3)}, rotate = 71.4] [fill={rgb, 255:red, 0; green, 0; blue, 0 }  ][line width=0.08]  [draw opacity=0] (10.72,-5.15) -- (0,0) -- (10.72,5.15) -- (7.12,0) -- cycle    ;
						
						\draw (134,128) node [anchor=north west][inner sep=0.75pt]   [align=left] {s};
						\draw (291,239) node [anchor=north west][inner sep=0.75pt]   [align=left] {s};
						\draw (290,66) node [anchor=north west][inner sep=0.75pt]   [align=left] {s};
						\draw (290,138) node [anchor=north west][inner sep=0.75pt]   [align=left] {s};
						\draw (411.85,167) node [anchor=north west][inner sep=0.75pt]   [align=left] {t};
						\draw (91,128) node [anchor=north west][inner sep=0.75pt]   [align=left] {$\displaystyle M$};
						\draw (229,50) node [anchor=north west][inner sep=0.75pt]   [align=left] {$\displaystyle M_{1}$};
						\draw (228,121) node [anchor=north west][inner sep=0.75pt]   [align=left] {$\displaystyle M_{2}$};
						\draw (229,224) node [anchor=north west][inner sep=0.75pt]   [align=left] {$\displaystyle M_{i}$};
						\draw (331.18,162.22) node [anchor=north west][inner sep=0.75pt]  [font=\tiny,rotate=-357.73,xslant=0.04] [align=left] {$\displaystyle  \begin{array}{{>{\displaystyle}l}}
								1-\frac{1}{2}\\
							\end{array}$};
						\draw (324,8) node [anchor=north west][inner sep=0.75pt]   [align=left] {$\displaystyle M'$};
						\draw (370.18,223.22) node [anchor=north west][inner sep=0.75pt]  [font=\tiny,rotate=-357.73,xslant=0.04] [align=left] {$\displaystyle  \begin{array}{{>{\displaystyle}l}}
								1-\frac{1}{i}\\
							\end{array}$};
						\draw (349,108) node [anchor=north west][inner sep=0.75pt]   [align=left] {\begin{minipage}[lt]{8.67pt}\setlength\topsep{0pt}
								\begin{flushright}
									{\scriptsize 0}
								\end{flushright}
								
						\end{minipage}};
						\draw (355,145) node [anchor=north west][inner sep=0.75pt]   [align=left] {\begin{minipage}[lt]{8.67pt}\setlength\topsep{0pt}
								\begin{flushright}
									{\scriptsize 0}
								\end{flushright}
								
						\end{minipage}};
						\draw (355,195) node [anchor=north west][inner sep=0.75pt]   [align=left] {\begin{minipage}[lt]{8.67pt}\setlength\topsep{0pt}
								\begin{flushright}
									{\scriptsize 0}
								\end{flushright}
								
						\end{minipage}};
						\draw (370,88) node [anchor=north west][inner sep=0.75pt]   [align=left] {\begin{minipage}[lt]{8.67pt}\setlength\topsep{0pt}
								\begin{flushright}
									{\scriptsize 0}
								\end{flushright}
								
						\end{minipage}};
						\draw (250,180) node [anchor=north west][inner sep=0.75pt]   [align=left] {$\vdots$};
						\draw (260,285) node [anchor=north west][inner sep=0.75pt]   [align=left] {$\vdots$};
						\draw (300,103) node [anchor=north west][inner sep=0.75pt]   [align=left] {\begin{minipage}[lt]{8.67pt}\setlength\topsep{0pt}
								\begin{flushright}
									{\scriptsize 0}
								\end{flushright}
								
						\end{minipage}};
						\draw (272,103) node [anchor=north west][inner sep=0.75pt]   [align=left] {\begin{minipage}[lt]{8.67pt}\setlength\topsep{0pt}
								\begin{flushright}
									{\scriptsize 0}
								\end{flushright}
								
						\end{minipage}};
						\draw (300,187) node [anchor=north west][inner sep=0.75pt]   [align=left] {\begin{minipage}[lt]{8.67pt}\setlength\topsep{0pt}
								\begin{flushright}
									{\scriptsize 0}
								\end{flushright}
								
						\end{minipage}};
						\draw (271,188) node [anchor=north west][inner sep=0.75pt]   [align=left] {\begin{minipage}[lt]{8.67pt}\setlength\topsep{0pt}
								\begin{flushright}
									{\scriptsize 0}
								\end{flushright}
								
						\end{minipage}};
						\draw (443,157) node [anchor=north west][inner sep=0.75pt]   [align=left] {\begin{minipage}[lt]{8.67pt}\setlength\topsep{0pt}
								\begin{flushright}
									{\scriptsize 1}
								\end{flushright}
								
						\end{minipage}};

					\end{tikzpicture}
				}
			\end{center}
			\caption{The left part is the structure of model $M$, and the right part is the structure of the model $M'$ described in case 2 of Lemma  \ref{lemma:notinTau_leads_to_have_model} in which each $M_i$ is a copy of $M$.} \label{fig:case3-lemma}
		\end{figure}
		\textbf{case 3.} $\varphi = \psi \wedge \chi$.
		Suppose that for a reflexive model $M$ and  a state $s\in S^M$, we have $(M,s)\nvDash \varphi$. Consequently, $(M,s)\nvDash \psi$ or $(M,s)\nvDash \chi$.
		Let's assume, without loss of generality,  $(M,s)\nvDash\psi$.
		By induction hypothesis there is a reflexive model $M'=(\mathbb{N}, r, \pi)$ and an 
		$n\in\mathbb{N}$ such that for infinitely many $m\in\mathbb{N}$ $\max\{1-r^{M'}(n,m), V_m^{M'}(\psi)\}\leq\frac{1}{m}$. It can be easily verified that in this model, for infinitely many $m\in \mathbb{N}$, $\max\{1-r^{M'}(n,m), V_m^{M'}(\psi\wedge\chi)\}\leq\frac{1}{m}$.\\
		\textbf{case 4.}
		$\varphi=\psi\rightarrow\chi$. 
		From $\nvDash \varphi$, we can infer $\nvDash \chi$. Let's define $A_n^M$, $B_n^M$, and $E$ as follows:
		$$A_n^M = \{m\in \mathbb{N}\mid \max\{1-r^M(n,m), V^M_m(\chi)\}\leq \frac{1}{m}\},$$
		$$B_n^M = \{m\in A_n^M\mid V^M_m(\psi)>V^M_m(\chi)\}.$$
		$$E = \{(M,n)\,\mid\, M =(\mathbb{N}, r, \pi)\, \text{ is a reflexive model}, n\in \mathbb{N},  A_n^M \text{ has infinite elements}\},$$ 
		By the induction hypothesis, we can conclude that the set $E$ is not empty. Our \textbf{claim} is that there exists $(M,n)\in E$ such that $B_n^M$ is infinite. It is important to note that this particular $(M,n)$ will satisfy $\max\{1-r^M(n,m), V_m^M(\psi\rightarrow\chi)\}\leq \frac{1}{m}$ for infinitely many $m\in \mathbb{N}$, since $V_m^M(\psi\rightarrow\chi)=V_m^M(\chi)$ for $m\in B_n^M$.
		
		\vskip 0.2cm
		\textbf{proof of the claim.} 
		
		\textbf{step 1.}
		There is $(M,n)\in E$ such that $B_n^M \neq \emptyset$.\\
		Assume that for all $(M,n)\in E$, $B_n^M = \emptyset$. So, for all $(M,n)\in E$ and $m\in \mathbb{N}$,  we have $m\notin A_n^M \text{ or } V_m^M(\psi)\leq V_m^M(\chi)$. In other words, for all $(M,n)\in E$ and $m\in \mathbb{N}$
		
		\begin{align}
			m\in A_n^M \Rightarrow V_m^M(\psi)\leq V_m^M(\chi). \label{eq_028}
		\end{align}
		Moreover, we can show that for all $(M,n)\in E$ and $m\in\mathbb{N}$ we have
		\begin{align}
			m \notin A_n^M \Rightarrow V_m^M(\psi)\leq V_m^M(\chi). \label{eq_029}
		\end{align}
		
		Indeed, suppose that there exists $(M^*, n^*)\in E$ and $m^*\in\mathbb{N}$ such that $m^*\notin A_{n^*}^{M^*}$ and $V_{m^*}^{M^*}(\psi) > V_{m^*}^{M^*}(\chi)$. This implies that $\max\{1-r^{M^*}(n^*, m^*), V_{m^*}^{M^*}(\chi)\} > \frac{1}{m^*}$ and $V_{m^*}^{M^*}(\psi) > V_{m^*}^{M^*}(\chi)$.
		Now, let's consider $z\in \mathbb{N}$ such that $\max\{1-r^{M^*}(n^*, m^*), V_{m^*}^{M^*}(\chi)\}\leq \frac{1}{z}$ (we can choose $z=1$ in this case). We can create a reflexive model $M_1$ from $M^*$ by relabeling the states such that the state originally labeled $m^*$ is now labeled $z$. Consequently, $\max\{1-r^{M_1}(n^*, z), V_z^{M_1}(\chi)\}\leq \frac{1}{z}$ and $V_z^{M_1}(\psi) > V_z^{M_1}(\chi)$. However, this implies that $z\in B_{n^*}^{M_1}$, which contradicts our initial assumption. Hence, Equation (\ref{eq_029}) holds.
		From Equation (\ref{eq_028}) and Equation (\ref{eq_029}), we can conclude that for any reflexive model $M=(\mathbb{N}, r, \pi)$ and any $m\in \mathbb{N}$, $V_m^M(\psi)\leq V_m^M(\chi)$. However, this contradicts our assumption that there exists a reflexive model $M$ and a state $s\in S^M$ where $(M,s)\nvDash \varphi$.
		
		\textbf{step 2.}		
		There is $(M,n)\in E$ such that $\mid B_n^M\mid\geq 2$.\\
		By step 1, there is $(M_1, n_1)\in E$, and $m_1\in B_{n_1}^{M_1}$. Let as a copy of $(M_1,n_1)$, we have $(M_2, n_2)\in E$ and $m_2\in B_{n_2}^{M_2}$. We build a model $M$ as the following:
		\begin{eqnarray*}
			& &S^{M} = S^{M_1}\cup S^{M_2,}\cup \{t\},\\
			& &\forall u\in S^{M_1}, \forall v\in S^{M_2}\;\;\;\; r^{M}(u, v) = r^{M}(v, u) = 0,\\
			& &\forall u, v \in  S^{M_1}\;\;\;\; r^{M}(u,v) = r^{M_1}(u,v),\\
			& &  \forall u, v \in  S^{M_2}\;\;\;\; r^{M}(u,v) = r^{M_2}(u,v),\\
			& &r^{M} (t, m_1) = 1-V_{m_1}(\chi),  \quad  r^{M} (t, m_2) = 1-V_{m_2}(\chi),\\
			& &r^{M} (m_1, 1) =  r^{M} (m_2, t) = 0,\\
			& &\forall u \in S^{M}\backslash\{m_1,m_2\}\;\;\;\; r^{M}(u,t)=r^{M}(t,u) = 0.
		\end{eqnarray*}
		
		The construction of the model $M$ ensures that all previous values of formulas in both model $M_1$ and $M_2$ are preserved. Furthermore, by renaming the states such that $S^{M}=\mathbb{N}$, we obtain new natural numbers $m_1', m_2'$ that correspond to the original $m_1$ and $m_2$, respectively and both $m_1'$ and $m_2'$ belong to $A_t^M$ and $B_t^M$.
		
		\textbf{step 3.}
		Constructing a model $M'$ from $M$ such that for infinitely many $p\in S^{M'}$
		$V_p^{M'}(\psi) > V_p^{M'}(\chi)$.\\
		Following step 2, let $(M, n) \in E$, and choose $m_0 \in B_n^M$ such that $m_0 \geq 2$. Thus, we have a reflexive model $M = (\mathbb{N}, r, \pi)$ where $m_0, n \in S^M$ (with $m_0 \geq 2$), satisfying $\max\{1-r^M(n,m_0), V_{m_0}^M(\chi)\}\leq \frac{1}{m_0}$ and $V_{m_0}^N(\psi)> V_{m_0}^M(\chi)$.
		We extend $M$ to $M'$ using the following procedure:
		
		$$\begin{array}{ll}
			\multicolumn{2}{l}{ S^{M'} = S^{M} \cup \bigcup_{j\in \mathbb{N}, j\geq2} \{m_0^j\} } \\
			\forall m,m' \in \mathbb{N} & r^{M'}(m,m') = r^M(m,m') ,\\
			\forall  j\geq 2,m\in \mathbb{N} & r^{M'}(m,m_0^j) = 0,\\
			& r^{M'}(m_0^j, m) = r^{M}(m_0, m) \\
			\forall j,j'\in \mathbb{N}& j\neq j' \quad r^{M'}(m_0^j,m_0^{j'})=0, \\ 
			& r^{M'}(m_0^j, m_0^j)=1,\\
			& r^{M'}(n,m_0^j) = r^M(n,m_0) - \frac{1}{j}.\\
			\forall j\in \mathbb{N}, & \pi^{M'}(j,p) = \pi^{M}(j,p), \\
			\forall j\geq 2 &  \pi^{M'}(m_0^j, p) = \pi^M(m_0,p)\\ 
		\end{array}$$
		In this new model $M'$, we introduce copies of $m_0$ denoted as $m_0^j$, and note that $M$ is a submodel of $M'$. We will now demonstrate that the following property holds for all formulas $\chi'$:
		
		\begin{equation}\label{eq019}
			\forall m\in \mathbb{N} \quad V_m^{M'}(\chi') = V_m^{M}(\chi') \quad \text{ and }\quad \forall j\geq 2 \quad V_{m_0^j}^{M'} (\chi') = V_{m_0}^M(\chi').
		\end{equation}
		
		We use induction on the complexity of $\chi'$. The base case, $\chi' = p$, is straightforward to prove using the definition of $M'$. Since the cases $\chi' = \neg \xi$, $\chi' = \xi \wedge \gamma$, and $\chi' = \xi \rightarrow \gamma$ do not depend on the accessibility relation, we only need to prove the case $\chi' = B \xi$. We will further divide this case into three following subcases to establish our proof:
		
		\begin{itemize}
			\item If $m\in \mathbb{N}$ and $m\neq n$. We have :
			\begin{align}
				&V_m^{M'}(B\xi) && = \inf_{m'\in S^{M'}} \max\{1-r^{M'}(m,m'), V_{m'}^{M'}(\xi)\} \nonumber\\
				&&&= \inf_{m'\in S^{M}} \max\{1-r^M(m,m'), V_{m'}^{M'}(\xi)\} \label{eq_007}\\
				&&&=\inf_{m'\in S^{M}} \max\{1-r^M(m,m'), V_{m'}^{M}(\xi)\} \label{eq_008} \\
				&&&=V_m^M(B\xi). \nonumber
			\end{align}
			The line (\ref{eq_007}) obtained from the definition of accessibility relations in $M'$ and (\ref{eq_008}) obtained by induction hypothesis.
			
			\item If $j\geq 2, j\in \mathbb{N}$. Consider the following sets:
			\begin{align}
				& C_1 = \bigcup_{k\in \mathbb{N}} \max\{1-r^{M'}(m_0^j,k), V_k^{M'}(\xi)\}, & \label{eq_0020} \\
				& D_1 = \bigcup_{k\in \mathbb{N}, k\geq2} \max\{1-r^{M'}(m_0^j, m_0^k), V_{m_0^k}^{M'}(\xi)\} \label{eq_020} \\
				& C_2 = \bigcup_{k\in \mathbb{N}} \max\{1-r^{M}(m_0,k), V_k^{M}(\xi)\}, &
				\label{eq_0021} \\
				& D_2 = \bigcup_{k\in \mathbb{N}, k\geq2} \max\{1-r^{M'}(m_0^j, m_0^k), V_{m_0^k}^{M}(\xi)\}. \label{eq_021}
			\end{align}
			We have:
			\begin{align}
				&V_{m_0^j}^{M'}(B\xi) && = \inf_{m'\in S^{M'}} \max\{1-r^{M'}(m_0^j,m'), V_{m'}^{M'}(\xi)\} \nonumber\\
				&&&= \inf  C_1 \cup D_1 \label{eq_009}\\
				&&&=\inf  C_2 \cup D_2 \label{eq_010} \\
				&&&=\inf  C_2 \label{eq_011} \\
				&&&=V_{m_0}^M(B\xi). \nonumber
			\end{align}
			Line (\ref{eq_009}) is obtained by using equations (\ref{eq_0020}) and (\ref{eq_020}). Similarly, line (\ref{eq_010}) is derived from the induction hypothesis and the definition of accessibility relations in $M'$, and equations (\ref{eq_0021}) and (\ref{eq_021}). Moving on to line (\ref{eq_011}), it follows from the fact that for all $k \in \mathbb{N}$ with $k \geq 2$, we have $\max\{1-r^{M'}(m_0^j, m_0^k), V_{m_0^k}^{M}(\xi)\}=1$. As a result, these terms do not have any impact on the infimum.
			
			\item For the remaining state $n\in \mathbb{N}$, similar to the previous subcase, we define the following sets:
			\begin{align}
				& C_3 = \bigcup_{k\in \mathbb{N}} \max\{1-r^{M'}(n,k), V_k^{M'}(\xi)\}, &
				\label{eq_0022}\\
				& D_3 = \bigcup_{k\in \mathbb{N}, k\geq2} \max\{1-r^{M'}(n, m_0^k), V_{m_0^k}^{M'}(\xi)\} \label{eq_022} \\
				& C_4 = \bigcup_{k\in \mathbb{N}} \max\{1-r^{M}(n,k), V_k^{M}(\xi)\}, &  \label{eq_0023}\\
				& D_4 = \bigcup_{k\in \mathbb{N}, k\geq2} \max\{1-r^{M}(n, m_0)+\frac{1}{k}, V_{m_0}^{M}(\xi)\}. \label{eq_023}
			\end{align}
			We have:
			\begin{align}
				&V_{n}^{M'}(B\xi) && = \inf_{m'\in S^{M'}} \max\{1-r^{M'}(n,m'), V_{m'}^{M'}(\xi)\} \nonumber\\
				&&&= \inf  C_3 \cup D_3 \label{eq_012}\\
				&&&=\inf  C_4 \cup D_4 \label{eq_013} \\
				&&&=\inf  C_4 \label{eq_014} \\
				&&&=V_{n}^M(B\xi). \nonumber
			\end{align}	
			Line (\ref{eq_012}) is derived using equations (\ref{eq_0022}) and (\ref{eq_022}). Similarly, line (\ref{eq_013}) is obtained from the induction hypothesis, the definition of accessibility relations in $M'$, and equations (\ref{eq_0023}) and (\ref{eq_023}). Moving on to line (\ref{eq_014}), it follows from (\ref{eq_023}) and the fact that for all $k \in \mathbb{N}$ with $k \geq 2$, we have:
			$$\max\{1-r(n,m_0), V_{m_0}(\xi)\} \leq  \max\{1-r(n,m_0)+\frac{1}{k}, V_{m_0}(\xi)\}. $$
		\end{itemize}
		
		\textbf{step 4}. Constructing a model $M''$ from $M'$ such that $B_n^{M''}$ has infinitely many numbers.\\
		For all states $m_0^k$ in $M'$ $(k\geq 2, k\in \mathbb{N})$ there is an  $\alpha\in \mathbb{N}$ such that:
		$$\max\{1-r^{M'}(n,m_0^k), V_{m_0^k}^{M'}(\chi)\}\leq \alpha,$$
		To fix some $t$ $(t\geq 1, t\in\mathbb{N})$ such that $\alpha\leq \frac{1}{t}$ we need to choose some suitable $\beta$ and $\beta'$ as follows:
		$$\alpha \leq \left \lbrace \begin{array}{ll}
			V_{m_0}^{M'}(\chi) + \frac{1}{k+\beta} & V_{m_0^k}^{M'}(\chi) > 1-r^{M'}(n,m_0^k) \\
			1-r^M(n,m_0)+\frac{1}{k+\beta'} & V_{m_0^k}^{M'}(\chi) \leq 1-r^{M'}(n,m_0^k),
		\end{array}\right.$$
		equivalently
		$$\alpha \leq \left \lbrace \begin{array}{ll}
			\frac{1}{\frac{k+\beta}{(k+\beta) V_{m_0}(\chi)+1}} & V_{m_0^k}^{M'}(\chi) > 1-r^{M'}(n,m_0^k) \\
			\frac{1}{\frac{k+\beta'}{(k+\beta')(1-r^M(n,m_0))+1}} & V_{m_0^k}^{M'}(\chi) \leq 1-r^{M'}(n,m_0^k).
		\end{array}\right.$$	
		So for the first case we find $\beta$ for which $t = [{\frac{(k+\beta)}{(k+\beta) V_{m_0}(\chi)+1}}]> 1$,
		and for the second case we find $\beta'$ for which 
		$t= [{\frac{(k+\beta')}{(k+\beta')(1-r^M(n,m_0))+1}}]> 1$.\\
		For the first case we have	
		\begin{align*}
			& {\frac{(k+\beta)}{(k+\beta) V_{m_0}(\chi)+1}} > 1  & \iff \\
			& (k+\beta) > (k+\beta) V_{m_0}(\chi)+1  & \iff\\
			& (k+\beta)(1-V_{m_0}(\chi)) > 1  & \iff \\
			& (k+\beta) > \frac{1}{1-V_{m_0}(\chi)} & \iff \\
			& \beta > \frac{1}{1-V_{m_0}(\chi)} - k & \iff \\
			& \beta > \frac{1-k-kV_{m_0}(\alpha)}{1-V_{m_0}(\chi)}.
		\end{align*}
		
		Note that the denominator is well defined because of the assumption that $\chi$ is not satisfiable in the state $m_0$. Therefore, if we choose $\beta > \frac{1-k-kV_{m_0}(\alpha)}{1-V_{m_0}(\chi)}$, then $t = \left[\frac{(k+\beta)}{(k+\beta) V_{m_0}(\chi)+1}\right] > 1$, satisfying the desired condition. Similarly, for the second case, we proceed as follows:
		
		\begin{align*}
			& {\frac{(k+\beta')}{(k+\beta')(1-r^M(n,m_0))+1}} > 1  & \iff \\
			& (k+\beta') > (k+\beta')(1-r^M(n,m_0))+1  & \iff\\
			& (k+\beta')(1-(1-r^M(n,m_0))) > 1  & \iff \\
			& (k+\beta') > \frac{1}{r^M(n,m_0)} & \iff \\
			& \beta' > \frac{1}{r^M(n,m_0)} - k & \iff \\
			& \beta' > \frac{1-k\,r^M(n,m_0)}{r^M(n,m_0)}.
		\end{align*}
		
		Note that $r^M(n,m_0)\neq 0$ since $m_0\geq 2$ and $\max\{1-r^M(n,m_0), V_{m_0}^M(\chi)\}\leq \frac{1}{m_0}$. Now, to construct the model $M''$, we change the name of the state $m_0^k$ to $ \left[\frac{(k+\beta')}{(k+\beta') V_{m_0}(\chi)+1}\right]$ if $V_{m_0^k}^{M'}(\chi) > 1-r^{M'}(n,m_0^k)$, and otherwise to $ \left[\frac{(k+\beta')}{(k+\beta')(1-r^M(n,m_0))+1}\right]$. Since $\vert S^{M'}\vert = \vert S^{M}\vert = \aleph_0$, it is possible to rename other states using natural number such that $S^{M''}=\mathbb{N}$. Thus, $M''$ is our desired model where for infinitely many states $m\in \mathbb{N}$, we have $\max\{1-r^{M''}(n,m), V_m^{M''}(\chi)\}\leq \frac{1}{m}$. Additionally, from (\ref{eq019}), we have $V_{m_0}^M(\chi) = V_m^{M''}(\chi)$ and $V_{m_0}^M(\psi) = V_{m}^{M''}(\psi)$, which implies $V_m(\psi) > V_m(\chi)$. Therefore, all renamed states $m\in \mathbb{N}$ (from $m_0^j$) belong to $B_n^{M''}$, indicating that $B_n^{M''}$ contains infinitely many states.
		
		\hfill $\blacksquare_\textit{Claim}$ \\
		\textbf{case 5.} $\varphi = B \psi$.
		Consider a reflexive model $M$ and $s\in S^M$ such that $(M,s)\nvDash \varphi$. Then, there exists a state $s'\in S^M$ such that $(M,s')\nvDash \psi$.
		By the induction hypothesis on $\psi$, there exists a reflexive model $M'=(\mathbb{N}, r, \pi)$ and $n\in \mathbb{N}$ such that for infinitely many $m\in\mathbb{N}$, we have
		
		\begin{equation}\label{eq_024}
			\max\{1-r^{M'}(n,m), V_m^{M'}(\psi)\}\leq\frac{1}{m}.
		\end{equation}
		Since $M'$ is reflexive
		we have 
		\begin{equation}\label{eq_025}
			V_m^{M'}(B\psi) \leq V_m^{M'}(\psi) \leq \frac{1}{m}.
		\end{equation}
		
		Thus, by utilizing (\ref{eq_024}) and (\ref{eq_025}), we can deduce that for infinitely many $m\in \mathbb{N}$, the inequality $\max\{1-r^{M'}(n,m), V_m^{M'}(B\psi)\}\leq \frac{1}{m}$ holds. 
	\end{proof}

\begin{thebibliography}{100}
	\bibitem{BaazPreining2011}

	M. Baaz , N. Preining,  G\"odel-Dummett Logics, in: P. Cintula, P. Hajek and C. Noguera (Eds.) Handbook on Mathematical Fuzzy Logic, Studies in Logic, Chap. VII , Vol 1. (2011)
	
	
	\bibitem{handbook}

	P. Blackburn, J. van Benthem, F. Wolter (Eds.), Handbook of Modal Logic, Vol. 3. in:  Studies in Logic and Practical Reasoning, Elsevier, 2007.
	
\bibitem{CMRR13}
	X. Caicedo, G. Metcalfe, R. O. Rodriguez, and J. Rogger, A finite model property for G\"odel modal logics,  in: L. Libkin, U. Kohlenbach, R. de Queiroz (Eds.) Logic, Language, Information, and Computation. WoLLIC (2013) 226-237. 	Lecture Notes in Computer Science, V. 8071. Berlin, Heidelberg, Springer.
	\url{https://doi.org/10.1007/978-3-642-39992-3_20}


	
\bibitem{Caicedo2004}
	X. Caicedo, R. O. Rodriguez, A G\"odel modal logic, in: Logic, Computability and Randomness, (2004) full text eprint arXiv:0903.2767.

	\bibitem{CR10}
	X. Caicedo, R. O. Rodriguez,  Standard G\"odel Modal Logics,  Studia Logica \textbf{94},  (2010) 189-214.
	
	\bibitem{CR15}
	X. Caicedo, R. O. Rodriguez, Bi-modal G\"odel Logic over [0,1]-valued Kripke frames, Journal of Logic and Computation, 25 (2012) 37-55. \url{https://doi.org/10.1093/logcom/exs036}
	
	\bibitem{Corsi2023}
	E. A. Corsi, T. Flaminio,  L. Godo, H.  Hosni, A modal logic for uncertainty: a completeness theorem. International Symposium on Imprecise Probability: Theories and Applications. PMLR, 2023.

\bibitem{Diequez2023}
	M. Di$\acute{e}$guez, D. Fern$\acute{a}$ndez-Duque, Decidability for S4 Gödel Modal Logics, in: M. E. Cornejo, I. $\acute{A}$. Harmati, L.T. Kóczy, J. Medina-Moreno (Eds.) Computational Intelligence and Mathematics for Tackling Complex Problem 4. SCI \textbf{1040} (2023) 1-7. Springer, Cham. \url{https://doi.org/10.1007/978-3-031-07707-4_1}

	
	\bibitem{ditmarsch2015}
	H. van Ditmarsch, J. Y. Halpern, W. van der Hoek, B. Kooi, Handbook of Epistemic Logics, College Publications (2015) 
	

	\bibitem{Gettier1963}
	E. L. Gettier, Is Justified True Belief Knowledge?. Analysis. 23 (6): 121–123. (1963) \url{doi:10.1093/analys/23.6.121}.

	\bibitem{Hajek1998}
	P. H\'ajek, Metamathematics of Fuzzy Logic, in: Trends in Logic, Springer, Dordrecht, 1998.
	
	\bibitem{Hajek2010}
	P. H\'ajek, On Fuzzy Modal Logics $S5(c)$, Fuzzy Sets and Systems, 161(18), (2010) 2389-2396. \url{https://doi.org/10.1016/j.fss.2009.11.011}
	
\bibitem{Hansoul2006}
	G. Hansoul, B. Teheux, Completeness results for many-valued \L ukasiewicz modal systems and relational semantics, (2006). \url{arXiv:math.LO/0612542vl}

	
\bibitem{Hansoul2013} 
	G. Hansoul and B. Teheux, Extending \L ukasiewicz Logics with a Modality: Algebraic approach to relational
	semantics, Studia Logica, 101(3), (2013)  505-545.
	
\bibitem{Helbin2019}
	P. Helbin, M. Baczyński, P. Grzegorzewski, W. Niemyska,
	Some properties of fuzzy implications based on copulas,
	Information Sciences,
	Volume 502,
	(2019).
	\url{https://doi.org/10.1016/j.ins.2019.06.019}.
	
	
	
	
	\bibitem{Meyer} 
	J.-J. Ch. Meyer, W. van der Hoek, Epistemic Logic for AI and Computer Science, Cambridge University Press,
	Cambridge, 1995.
	

	\bibitem{Pischke2020}
	N. A. Pischke,  Note on Strong Axiomatization of Gödel Justification Logic. Stud Logica 108, 687–724 (2020). \url{https://doi.org/10.1007/s11225-019-09871-4}
		
	\bibitem{Rodriguez2022}
	R. O. Rodriguez, O. F. Tuyt, F. Esteva, Godo, L. Simplified Kripke semantics for K45-like G\"o del modal logics and its axiomatic extensions. Studia Logica 110.4 (2022): 1081-1114.
	
	\bibitem{Thesis2015}
	A. Vidal, On Modal Expansions of t-norm based Logics with Rational Constants, Ph.D. thesis, University of Barcelona, (2015).
	
	\bibitem{product2015} 
	A. Vidal, F. Esteva, L. Godo, On Modal Extensions of Product Fuzzy Logic,  Journal of Logic and Computation,  27(1),  (2017) 299-336. \url{https://doi.org/10.1093/logcom/exv046}
		
	\end{thebibliography}
\end{document}